\newtheorem{thm}{Theorem}[section]
\newtheorem{prop}[thm]{Proposition}
\newtheorem{lem}[thm]{Lemma}
\newtheorem{cor}[thm]{Corollary}
\theoremstyle{definition}
\newtheorem{example}[thm]{Example}
\newtheorem{rmk}[thm]{Remark}
\newtheorem{defn}[thm]{Definition}
\numberwithin{equation}{section}
\newcommand{\C}{\mathbb{C}}
\newcommand{\A}{\mathbb{A}}
\newcommand{\N}{\mathbb{N}}
\newcommand{\Q}{\mathbb{Q}}
\newcommand{\R}{\mathbb{R}}
\newcommand{\Z}{\mathbb{Z}}
\newcommand{\G}{\mathbb{G}}
\newcommand{\cA}{\mathcal{A}}
\newcommand{\cC}{\mathcal{C}}
\newcommand{\cE}{\mathcal{E}}
\newcommand{\cF}{\mathcal{F}}
\newcommand{\cH}{\mathcal{H}}
\newcommand{\cI}{\mathcal{I}}
\newcommand{\cO}{\mathcal{O}}
\newcommand{\cL}{\mathcal{L}}
\newcommand{\cM}{\mathcal{M}}
\newcommand{\cT}{\mathcal{T}}
\newcommand{\fM}{\mathfrak{M}}
\DeclareMathOperator{\Pic}{Pic}
\DeclareMathOperator{\Coker}{Coker}
\DeclareMathOperator{\Ker}{Ker}
\DeclareMathOperator{\Hom}{Hom}
\DeclareMathOperator{\Cohs}{\textit{Coh}}
\DeclareMathOperator{\rk}{rk}
\DeclareMathOperator{\im}{Im}
\DeclareMathOperator{\codim}{codim}
\DeclareMathOperator{\Spec}{Spec}
\DeclareMathOperator{\Quot}{Quot}
\DeclareMathOperator{\Coh}{Coh}
\DeclareMathOperator{\gr}{gr}
\DeclareMathOperator{\Db}{D^b}
\DeclareMathOperator{\ST}{\overline{ST}}
\DeclareMathOperator{\BG}{B\G}
\DeclareMathOperator{\Tors}{Tors}
\DeclareMathOperator{\GL}{GL}
\DeclareMathOperator{\Aut}{Aut}
\DeclareMathOperator{\Todd}{Todd}
\DeclareMathOperator{\ch}{ch}
\begin{document}

\title[Moduli spaces of slope-semistable sheaves]{Moduli spaces of slope-semistable sheaves with reflexive Seshadri graduations}
\author{Mihai Pavel,  Matei Toma}

\address{Institute of Mathematics of the Romanian Academy,
P.O. Box 1-764, 014700 Bucharest, Romania
}
\email{cpavel@imar.ro}

\address{ Universit\'e de Lorraine, CNRS, IECL, F-54000 Nancy, France
}
\email{Matei.Toma@univ-lorraine.fr}

\keywords{semistable coherent sheaves, moduli spaces}
\subjclass[2020]{14D20, 32G13}

\begin{abstract}
    We study the moduli stacks of slope-semistable torsion-free coherent sheaves that admit reflexive, respectively locally free, Seshadri graduations on a smooth projective variety. We show that they are open in the stack of coherent sheaves and that they admit good moduli spaces when the field characteristic is zero. In addition, in the locally free case we prove that the resulting moduli space is a quasi-projective scheme.
\end{abstract}
\maketitle
\section{Introduction}

Apart from its usefulness in studying geometrical properties of a given projective variety, e.g. \cite{Miyaoka, CampanaPaun}, the concept of slope-stability of torsion-free coherent sheaves is important in algebraic geometry for offering a restrictive condition under which the construction of moduli spaces of coherent sheaves on a fixed projective variety becomes possible. It was first used in the classification of locally free sheaves over algebraic curves \cite{mumford63,narasimhan65stable,seshadri67} and later extended to higher dimensional bases polarized by some ample divisor  class \cite{takemoto1972stable}. However it turned out that in order to obtain reasonable compactifications of the obtained moduli spaces one needed to allow at the boundary {\em $S$-equivalence classes} of torsion-free sheaves which are Gieseker-Maruyama-semistable, \cite{gieseker77, maruyama78moduli,simpson1994moduli}. One feature of the Gieseker-Maruyama compactifications is that they corepresent a suitable moduli functor. For the known constructions of compact  moduli spaces of slope-semistable torsion-free sheaves  this feature is lost, \cite{HuybrechtsLehn, greb16moduli, greb2021HYM}. Here we exhibit an open subfunctor of the moduli functor of  coherent sheaves, which is corepresented by a moduli space and which contains all slope-polystable reflexive sheaves. 

The problem of constructing moduli spaces of slope-polystable locally free sheaves was recently addressed by Buchdahl and Schumacher in a  complex geometrical setting. Specifically they constructed 
a coarse moduli space of slope-polystable locally free sheaves which enjoys a universal property over the category of weakly normal analytic spaces \cite[Theorem 1]{BuchdahlSchumacher3}. The techniques employed rely on the Kobayashi-Hitchin correspondence and on analytic Geometric Invariant Theory.  In the process the authors proved that small deformations of polystable locally free sheaves enjoy  a special property, namely that they have locally free Seshadri graduations, see Definition \ref{def:Seshadri}. 

The above property, which we will call property $(SLF)$, is the object of study of this paper in an algebraic geometrical context. We will also consider the weaker property $(SR)$ of slope-semistable torsion-free sheaves to admit a reflexive Seshadri graduation. Our main result asserts that these properties are open in flat families of coherent sheaves and that they give rise to corresponding moduli spaces which corepresent natural functors on the category of (noetherian) schemes. More precisely we show:

\begin{thm}[Theorem \ref{cor:MainThm}, Corollary \ref{cor:MainThm}, Theorem \ref{thm:SchemeStr}]
Let $(X,H)$ be a polarized smooth projective variety over an algebraically closed field $k$ of zero characteristic, and fix a numerical class $\gamma \in K_{num}(X)$. Then the substacks $\Cohs_{X,\gamma}^{(SLF)}$ and $\Cohs_{X,\gamma}^{(SR)}$  of slope-semistable torsion-free sheaves of class $\gamma$ on $X$ with the property $(SLF)$, respectively $(SR)$, are open in the stack $\Cohs_X$ of all coherent sheaves and admit good moduli spaces $M_{X,\gamma}^{(SLF)}$, respectively $M_{X,\gamma}^{(SR)}$, in the sense of Alper \cite{alper2013good}.

In addition, $M_{X,\gamma}^{(SLF)}$ is a quasi-projective scheme over $k$.
\end{thm}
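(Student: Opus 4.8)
The plan is to produce an ample line bundle on $M_{X,\gamma}^{(SLF)}$ and then invoke the general fact that a separated algebraic space of finite type over $k$ carrying an ample invertible sheaf is automatically a quasi-projective scheme. The first two parts of the theorem already present $\Cohs_{X,\gamma}^{(SLF)}$ as an open substack of $\Cohs_X$ with a good moduli space, so the entire difficulty is concentrated in constructing such a line bundle and proving its positivity.

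First I would fix a global parameter space. By boundedness of slope-semistable torsion-free sheaves of fixed numerical class $\gamma$, there is an integer $m \gg 0$ such that every such $E$ is $m$-regular; writing $P = \chi(E(m))$ and $V = k^{P}$, each $E$ arises as a quotient $V \otimes \cO_X(-m) \twoheadrightarrow E$, yielding a $\GL(V)$-invariant locally closed subscheme $R \subseteq \Quot(V \otimes \cO_X(-m),\gamma)$ together with a presentation $\Cohs_{X,\gamma}^{(SLF)} \simeq [R^{(SLF)}/\GL(V)]$, where $R^{(SLF)} \subseteq R$ is the open locus furnished by the openness statement. The good moduli space $M_{X,\gamma}^{(SLF)}$ is then the good quotient of $R^{(SLF)}$ by $\GL(V)$, already known to exist as a finite-type algebraic space. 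On this parameter space I would construct, following Le Potier--Simpson and the determinantal constructions of Greb--Toma, the determinant line bundles $\lambda(u) = \det\bigl(Rp_{*}(\cF \otimes p_X^{*}u)\bigr)$ associated to the universal quotient $\cF$ and to classes $u \in \Knum(X)$ orthogonal to $\gamma$. For such $u$ the natural $\GL(V)$-linearization makes the scalar automorphisms — and, at a closed point, the full stabilizer of the corresponding polystable sheaf — act trivially on the fibres, so that $\lambda(u)$ descends to a line bundle on $M_{X,\gamma}^{(SLF)}$ by the standard descent criterion for good moduli spaces.

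The crux is to show that, for a suitable orthogonal class $u$ supported on a general complete intersection curve, the descended bundle is ample, and this is exactly where local freeness of the Seshadri graduation is used. For sheaves with locally free $\gr$ one can restrict to a general complete intersection curve without destroying semistability and thereby compare $M_{X,\gamma}^{(SLF)}$ with a projective target under which $\lambda(u)$ is the pullback of an ample class. Because two non-isomorphic polystable sheaves with locally free graduation have distinct Seshadri graduations, this comparison is injective on points; since local freeness of the polystable representative depends only on the $S$-equivalence class and is an open condition, the cleanest realization is to identify $M_{X,\gamma}^{(SLF)}$ with the open locus of the projective moduli space of slope-semistable sheaves of Greb--Toma where the polystable representative is locally free. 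Restricting the ample bundle on that projective target then gives ampleness of $\lambda(u)$, and with it quasi-projectivity; this also explains why one obtains only an algebraic space in the merely reflexive case, where the comparison need not be an immersion.

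The main obstacle I anticipate is precisely this ampleness and separation step. Slope-semistability is not a GIT stability condition on $R$, so the classical Gieseker--GIT comparison does not apply directly, and one must exploit the locally free structure of the graduation both to guarantee that the comparison morphism separates $S$-equivalence classes and to promote it to a locally closed immersion. The final delicate point is to check, using the uniqueness of good moduli spaces in the sense of Alper, that the scheme structure supplied by Alper's theory coincides with the one induced from the projective target, so that the open-subscheme description is valid scheme-theoretically and not merely on points.
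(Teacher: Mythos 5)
Your proposal has two genuine gaps, one of scope and one of substance.

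First, the scope gap: the statement you were asked to prove asserts (i) openness of $\Cohs_{X,\gamma}^{(SR)}$ and $\Cohs_{X,\gamma}^{(SLF)}$ in $\Cohs_X$, (ii) existence of good moduli spaces, and (iii) quasi-projectivity of $M^{(SLF)}_{X,\gamma}$. You declare (i) and (ii) to be ``already present'' and prove only (iii), but they are conclusions, not hypotheses, and they carry most of the weight of the paper. The paper proves (i) via a Langton-type degeneration statement (Lemma \ref{lem:torsion-freeQuotient}) combined with a properness argument for a locus inside a relative Quot scheme (Proposition \ref{prop:openness}), and proves (ii) by verifying $\Theta$-reductivity (Proposition \ref{prop:ThetaReductive}) and $S$-completeness (Proposition \ref{prop:Scomplete}, using Langton's elementary transformations via Lemma \ref{lem:adjacentModules}) so as to apply the existence criterion of Alper--Halpern-Leistner--Heinloth (Theorem \ref{thm:AHLH}). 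None of this, nor any substitute for it, appears in your proposal.

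Second, the substantive gap in your proof of (iii). Your construction and descent of the determinant line bundle agrees with the paper (Proposition \ref{prop:LBdescent}). But your positivity argument --- identifying $M^{(SLF)}_{X,\gamma}$ with an open locus of the Greb--Toma moduli space $M^{\mu ss}_{X,\gamma}$ and restricting its ample class --- does not go through as stated. That space is constructed only as a \emph{weakly normal} projective variety, it is not a good moduli space in general, and its universal property is available only in the weakly normal category; consequently the comparison morphism exists only after weak normalization, and one obtains (as the paper records in Remark \ref{rmk:WNcompact}) an open embedding $M^{(SLF),wn}_{X,\gamma} \to M^{\mu ss}_{X,\gamma}$, not one from $M^{(SLF)}_{X,\gamma}$ itself. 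Thus your route would prove quasi-projectivity of the weak normalization only, and the ``final delicate point'' you defer --- that Alper's scheme structure coincides with the induced one --- is exactly where the argument breaks, since the two can differ by a nontrivial universal homeomorphism. The paper circumvents this entirely: by Langer's restriction theorem one restricts to a general complete intersection curve $C$, obtaining a morphism of stacks $\Cohs_{X,\gamma}^{(SLF)} \to \Cohs_{C,\gamma_C}^{Gss}$ whose target has as good moduli space the honest projective Gieseker moduli scheme $M^{Gss}_{C,\gamma_C}$; the induced map $\varphi : M^{(SLF)}_{X,\gamma} \to M^{Gss}_{C,\gamma_C}$ exists by the universal property of good moduli spaces, satisfies $\varphi^* A \cong L^{a^{n-1}}$ and separates points (Proposition \ref{prop:SemiampleLB}), and then quasi-finiteness, \cite[Proposition 3.1]{OlssonStarrQuot} and Zariski's Main Theorem yield that $M^{(SLF)}_{X,\gamma}$ is a quasi-projective scheme with $L$ ample (Theorem \ref{thm:SchemeStr}). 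If you want to salvage your outline, replace the target $M^{\mu ss}_{X,\gamma}$ by $M^{Gss}_{C,\gamma_C}$ for a suitable curve $C$; this is the step where property $(SLF)$ is genuinely used, since restriction to $C$ of the locally free graduation controls semistability and injectivity on closed points.
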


The main technical result that we use in the proof is the recent existence theorem of Alper, Halpern-Leistner and Heinloth for good moduli spaces, \cite{AlperHLH}.

Here are some further properties of the moduli space $M_{X,\gamma}^{(SLF)}$ that we obtain. 

Firstly, it contains the moduli space of locally free stable sheaves as an open subscheme, which is also an open subscheme of the moduli space of Gieseker-Maruyama (GM) semistable sheaves. All these spaces are good moduli spaces of appropriate open substacks of $\Cohs_X$. Note however that in general they are mutually distinct, cf. Example \ref{ex:GM}. 

Secondly, $M_{X,\gamma}^{(SLF)}$ is not proper in general. In the surface case we compactify it in Theorem \ref{thm:BridgelandCompact} via a certain good moduli space of Bridgeland semistable objects, which was recently shown to be projective by Tajakka \cite{Tajakka}. In higher dimensions there is a natural open embedding 
\[
    M^{(SLF),wn}_{X,\gamma} \to M^{\mu ss}_{X,\gamma},
\]
where $M^{(SLF),wn}_{X,\gamma}$ is the weak normalization of $M^{(SLF)}_{X,\gamma}$, and $M^{\mu ss}_{X,\gamma}$ is the (weakly normal) moduli space of slope-semistable sheaves constructed in \cite{greb2017compact}. Note that $M^{\mu ss}_{X,\gamma}$ is not a good moduli space in general.

\subsection*{Acknowlegements: } {We thank the anonymous referee for their constructive comments, which helped us improve the presentation.} 

The authors acknowledge financial support from  IRN ECO-Maths. MP was also partly supported by the PNRR grant CF 44/14.11.2022 \textit{Cohomological Hall algebras
of smooth surfaces and applications}.

\section{Properties of sheaves with reflexive Seshadri filtrations}

We will denote by $X$ a smooth projective variety of dimension $n$ over an algebraically closed field $k$. Let $H$ be an integral ample class on $X$. 

For a torsion-free sheaf $E \in \Coh(X)$, we define the slope of $E$ (with respect to $H$) by
\[
    \mu(E) \coloneqq \frac{c_1(E) \cdot H^{n-1}}{\rk(E)}.
\]

\begin{defn}[Slope-semistability]
A sheaf $E \in \Coh(X)$ is \textit{slope-semistable} (resp. \textit{slope-stable}) if 
\begin{enumerate}
    \item $E$ is torsion-free, 
    \item for any subsheaf $F \subset E$ with $0 < \rk(F) < \rk(E)$ we have
    \[
        \mu(F) \le \mu(E) \quad (\text{resp. }<).
    \]
\end{enumerate}
\end{defn}

We also will say $\mu$-(semi)stable instead of slope-(semi)stable.

Any slope-semistable sheaf $E$ on $X$ is known to admit a \textit{Seshadri filtration} 
\[
    E_\bullet: \quad 0 = E_0 \subset E_1 \subset \ldots \subset E_m = E
\]
such that its factors $E_i/E_{i-1}$ are slope-stable with $\mu(E_i/E_{i-1}) = \mu(E)$ for all $i$, \cite[Theorem 1.6.7]{HuybrechtsLehn}. In general neither the filtration nor its associated graded module $\gr(E_\bullet) \coloneqq \oplus_i E_i/E_{i-1}$ are unique, see \cite[Example 3.1]{BTT2017}.
 
\begin{defn}\label{def:Seshadri}
    We say that a torsion-free coherent sheaf $E$ on $X$ has {\emph{property $(SR)$}}, resp. $(SLF)$, if $E$ is $\mu$-semistable and for some  Seshadri filtration of $E$, its corresponding graded module is reflexive, resp. locally free. 
\end{defn}

In this section and the next one all statements will concern coherent sheaves satisfying property $(SR)$ or property $(SLF)$, but the proofs will only treat the case of property $(SR)$, the same arguments applying  for property $(SLF)$.

\begin{rmk}\label{rmk:reflexive}
Consider a short exact sequence
\[
    0 \to E \to F \to G \to 0
\]
of coherent sheaves on $X$. By using \cite[Tag 00LX]{stacks-project} one can easily check:
\begin{enumerate}
    \item If $E$ and $G$ are reflexive, resp. locally free, then $F$ is reflexive, resp. locally free.
    \item If $F$ is reflexive and $G$ is torsion-free, then $E$ is reflexive.
\end{enumerate}
Note that if $E$ satisfies $(SR)$, resp. $(SLF)$, then $E$ is in particular reflexive, resp. locally free.
\end{rmk}

\begin{prop}\label{prop:EquivProp}
    Let $E$ be a slope-semistable torsion-free sheaf. Then $E$ satisfies $(SR)$, resp. $(SLF)$, if and only if any torsion-free quotient $F$ of $E$ of slope $\mu(F)=\mu(E)$ is reflexive, resp. locally free.
\end{prop}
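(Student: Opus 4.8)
The plan is to prove both implications directly, using only the existence of Seshadri filtrations for slope-semistable sheaves \cite[Theorem 1.6.7]{HuybrechtsLehn} together with the fact, recorded in Proposition \ref{prop:equivDef}, that once one Seshadri graduation of $E$ is reflexive (resp. locally free) then all of them are. I will use repeatedly that slope is additive in short exact sequences, so that in any exact sequence $0 \to A \to B \to C \to 0$ of torsion-free sheaves with $\mu(A) = \mu(C) = \mu(E)$ one has $\mu(B) = \mu(E)$, and conversely $\mu(B) = \mu(C) = \mu(E)$ forces $\mu(A) = \mu(E)$. Following the convention of this section I only write the argument for $(SR)$.

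For the implication from quotients to the graded module, I would start from an arbitrary Seshadri filtration $0 = E_0 \subset E_1 \subset \dots \subset E_m = E$ and aim to show each factor $E_i/E_{i-1}$ is reflexive, after which Remark \ref{rmk:reflexive}(1) gives that $\gr(E^\bullet)$ is reflexive, i.e. $E$ satisfies $(SR)$. For each $i$ the quotient $E/E_{i-1}$ is torsion-free, being an iterated extension of the stable factors $E_j/E_{j-1}$ for $j \geq i$, and it has slope $\mu(E)$ by additivity; hence by hypothesis it is reflexive. Applying Remark \ref{rmk:reflexive}(2) to the exact sequence $0 \to E_i/E_{i-1} \to E/E_{i-1} \to E/E_i \to 0$, whose quotient $E/E_i$ is torsion-free for the same reason, I conclude that $E_i/E_{i-1}$ is reflexive, as desired.

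For the converse, suppose $E$ satisfies $(SR)$ and let $0 \to K \to E \to F \to 0$ be a torsion-free quotient with $\mu(F) = \mu(E)$. First I would observe that $K$ and $F$ are both slope-semistable of slope $\mu(E)$: $F$ is a quotient of the semistable $E$ of the same slope, and then $\mu(K) = \mu(E)$ by additivity, so $K$, being a torsion-free subsheaf of the semistable $E$ of maximal slope, is itself semistable. Consequently both $K$ and $F$ admit Seshadri filtrations, say $0 = K_0 \subset \dots \subset K_r = K$ and $0 = F_0 \subset \dots \subset F_s = F$. Writing $\pi \colon E \to F$ for the projection, the concatenation
\[
    0 = K_0 \subset \dots \subset K_r = K = \pi^{-1}(F_0) \subset \pi^{-1}(F_1) \subset \dots \subset \pi^{-1}(F_s) = E
\]
is a Seshadri filtration of $E$, since $\pi^{-1}(F_j)/\pi^{-1}(F_{j-1}) \cong F_j/F_{j-1}$ is stable of slope $\mu(E)$. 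Its associated graded module is $\gr(K^\bullet) \oplus \gr(F^\bullet)$.

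By Proposition \ref{prop:equivDef} every Seshadri graduation of $E$ is reflexive, so in particular $\gr(K^\bullet) \oplus \gr(F^\bullet)$ is reflexive; since a direct summand of a reflexive sheaf is reflexive (the natural map to the double dual respects the splitting), the summand $\gr(F^\bullet)$ is reflexive. Thus $F$ is slope-semistable with a reflexive Seshadri graduation, i.e. $F$ satisfies $(SR)$, and in particular $F$ is reflexive by Remark \ref{rmk:reflexive}. The main point to get right is this converse direction: one must build a \emph{single} Seshadri filtration of $E$ adapted to the quotient $F$, so that $\gr(F^\bullet)$ appears as a genuine direct summand of a graduation of $E$ and Proposition \ref{prop:equivDef} becomes applicable; the verification that $K$ and $F$ are semistable of slope $\mu(E)$ is precisely what legitimises the two halves of this filtration as Seshadri filtrations.
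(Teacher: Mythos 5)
Your proof is correct and follows essentially the same route as the paper: for the direction assuming $(SR)$ you concatenate Seshadri filtrations of the kernel and of $F$ into a single Seshadri filtration of $E$ and invoke Proposition \ref{prop:equivDef} to see that its graduation (hence the summand $\gr(F^\bullet)$) is reflexive, which is exactly the paper's argument; your spelled-out converse, using $E/E_{i-1}$ as a reflexive torsion-free quotient and Remark \ref{rmk:reflexive}(2), is the natural reading of the paper's ``the converse follows similarly.''
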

\begin{proof}
    We first assume that $E$ satisfies $(SR)$, and let $q: E \to F$ be a torsion-free quotient with $\mu(F)=\mu(E)$ and $G$ its kernel. It is easy to check that $G$ and $F$ are $\mu$-semistable with the same slope. Then $G$ and $F$ admit Seshadri filtrations, say $G_\bullet: 0 = G_0 \subset G_1 \subset \ldots \subset G_l = G$, and respectively $F_\bullet: 0 = F_0 \subset F_1 \subset \ldots \subset F_s = F$. We set $m = l + s$ and 
    \[
    E_j = \begin{cases}
       G_j, &\quad \text{if } 0 \le j \le l\\
       q^{-1}(F_{j-l}), &\quad \text{if } l+1 \le j \le m.\\
     \end{cases}
    \]
Then $0 = E_0 \subset E_1 \subset \ldots \subset E_m = E$ is a Seshadri filtration of $E$. As $E$ satisfies $(SR)$ {and by construction $q$ induces an isomorphism between $E_{j+l}/E_{j+l-1}$ and $F_j/F_{j-1}$ for $1 \le j \le s$}, we deduce that the graded factors $F_j/F_{j-1}$ of $F_\bullet$ are reflexive, and so $F$ is reflexive by Remark \ref{rmk:reflexive}. 

The converse follows similarly. 
\end{proof}

\begin{prop}\label{prop:DirectSum}
Let $E$ and $F$ be torsion-free sheaves on $X$ with $\mu= \mu(E)=\mu(F)$ that satisfy $(SR)$, resp. $(SLF)$. Then any  extension of $F$ by $E$ also satisfies $(SR)$, resp. $(SLF)$.
\end{prop}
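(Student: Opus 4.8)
The plan is to verify property $(SR)$ for an arbitrary extension $0 \to E \to W \to F \to 0$ by means of the characterization in Proposition \ref{prop:EquivProp}. First I would record that $W$ is again torsion-free and $\mu$-semistable of slope $\mu$: torsion-freeness is immediate since $\Tors(W)$ injects into the torsion-free sheaf $F$, and semistability follows from the standard fact that for any subsheaf $S \subseteq W$ one splits $S$ via $S \cap E \subseteq E$ and its image in $F$ and uses that $E$ and $F$ are $\mu$-semistable of slope $\mu$, obtaining $c_1(S)\cdot H^{n-1} \le \mu\,\rk(S)$; the same computation gives $\mu(W)=\mu$. Thus by Proposition \ref{prop:EquivProp} it suffices to show that every torsion-free quotient $q\colon W \twoheadrightarrow Q$ with $\mu(Q)=\mu$ is reflexive.

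Next I would analyse such a quotient by cutting it along the subsheaf $E$. Set $E' = q(E) \subseteq Q$ and $Q'' = Q/E'$, so that $E'$ is a torsion-free quotient of $E$ (being a subsheaf of the torsion-free $Q$) and $Q''$ is a quotient of $F$, since the composite $W \to Q''$ kills $E$ and hence factors through $F = W/E$ (the cases $E'=0$ or $Q''=0$ being immediate). A short slope computation pins down $E'$: as a torsion-free quotient of the $\mu$-semistable sheaf $E$ its slope is $\ge \mu$, while as a subsheaf of the $\mu$-semistable sheaf $Q$ its slope is $\le \mu$; hence $\mu(E')=\mu=\mu(E)$, and Proposition \ref{prop:EquivProp} applied to $E$ shows $E'$ is reflexive. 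The same bookkeeping in $0 \to E' \to Q \to Q'' \to 0$ forces $\mu(Q'')=\mu$.

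The crux is to show that $Q''$ is torsion-free, so that it qualifies as a torsion-free quotient of $F$ of slope $\mu(F)$ and is therefore reflexive by Proposition \ref{prop:EquivProp} applied to $F$. Let $\tilde Q \subseteq Q$ be the preimage of $\Tors(Q'')$, i.e.\ the saturation of $E'$ in $Q$, so that $\tilde Q/E'$ is torsion. Since $\rk \tilde Q = \rk E'$ and $c_1(\tilde Q)=c_1(E')+c_1(\tilde Q/E')$ with $c_1(\tilde Q/E')$ effective, we get $\mu(\tilde Q) \ge \mu(E')=\mu$; as $\tilde Q \subseteq Q$ also $\mu(\tilde Q)\le\mu$, forcing $c_1(\tilde Q/E')\cdot H^{n-1}=0$, which by ampleness of $H$ means $\tilde Q/E'$ is supported in codimension $\ge 2$. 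Here I would invoke the key property of the reflexive sheaf $E'$, namely that $E' = j_*(E'|_U)$ for $U$ the complement of any closed set of codimension $\ge 2$ (equivalently, $E'^{\vee\vee}=E'$ depends only on $E'$ in codimension $\le 1$). Applied with $U = X \setminus \Supp(\tilde Q/E')$ this gives $\tilde Q \subseteq j_*(\tilde Q|_U)=j_*(E'|_U)=E'$, whence $\tilde Q = E'$ and $Q''$ is torsion-free. Then $E'$ and $Q''$ are both reflexive, and Remark \ref{rmk:reflexive}(1) yields that $Q$ is reflexive, as required. The identical argument with ``reflexive'' replaced by ``locally free'', using that locally free sheaves are reflexive so the saturation step still applies, settles property $(SLF)$.

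The main obstacle is precisely this codimension-$2$ saturation step: \emph{a priori} $E'$ need not be saturated in $Q$, and without the normality property of reflexive sheaves the quotient $Q''$ could acquire torsion supported in codimension $\ge 2$ and fail to be a legitimate test quotient for $F$. Everything else reduces to routine slope bookkeeping together with the behaviour of reflexivity in short exact sequences recorded in Remark \ref{rmk:reflexive}.
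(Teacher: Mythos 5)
Your proof is correct and follows essentially the same route as the paper's: cut the test quotient $Q$ along the image of $E$, apply Proposition \ref{prop:EquivProp} to the two resulting pieces, and conclude via the extension property in Remark \ref{rmk:reflexive}. In fact you are more careful than the paper, whose proof silently treats $Q_F=Q/Q_E$ as a legitimate (i.e.\ torsion-free, slope-$\mu$) test quotient of $F$; your saturation argument, using the normality property of reflexive sheaves (the same tool the paper invokes in the proof of Lemma \ref{lem:torsion-freeQuotient}), supplies exactly that missing verification.
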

\begin{proof}
Consider an extension 
\[
    0 \to E \xrightarrow{\iota} G \xrightarrow{p} F \to 0,
\]
and let $G \to Q$ be a torsion-free quotient of slope $\mu$. By Proposition \ref{prop:EquivProp}, it is enough to show that $Q$ is reflexive. Consider the following commutative diagram
\[
\xymatrix{
     0 \ar[r] & E \ar[r]^\iota \ar[d] &  G \ar[r]^p \ar[d] &  F \ar[d] \ar[r] & 0\\  
     0 \ar[r] & Q_E \ar[r] &  Q \ar[r] &  Q_F  \ar[r] & 0 }
\]
where $\iota$ and $p$ are the canonical morphisms, and the vertical maps are surjective. As $E$ and $F$ satisfy $(SR)$, it follows by Proposition \ref{prop:EquivProp} that $Q_E$ and $Q_F$ are reflexive. Hence $Q$ is also reflexive.
\end{proof}

The following properties are now immediate.

\begin{prop}\label{prop:abelianCategory}
The full subcategory of $\Coh(X)$ whose objects are the zero sheaf and the torsion-free sheaves with the property $(SR)$, resp. $(SLF)$, and fixed slope equal to $\mu$ is abelian, noetherian, artinian and closed under extensions.
\end{prop}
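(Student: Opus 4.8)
The plan is to show that $\mathcal{C}$ — the full subcategory of $\Coh(X)$ on the zero sheaf and the torsion-free slope-$\mu$ sheaves with property $(SR)$ — is closed under forming sheaf-theoretic kernels, images and cokernels, after which all the axioms become formal. Closure under extensions is immediate from Proposition \ref{prop:DirectSum}: an extension of two slope-$\mu$ objects is torsion-free of slope $\mu$ by additivity of $\rk$ and of $\deg(-)=c_1(-)\cdot H^{n-1}$, and has $(SR)$ by that proposition; taking split extensions shows $\mathcal C$ is additive. For the abelian axioms it then suffices to prove that for every $f\colon E\to F$ in $\mathcal C$ the sheaves $\Ker f$, $\im f$ and $\Coker f$ lie in $\mathcal C$: these then compute the categorical kernel, image and cokernel, the coimage $E/\Ker f=\im f$ agrees with the image $\Ker(F\to\Coker f)=\im f$, and the inclusion $\mathcal C\hookrightarrow\Coh(X)$ is exact.

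Assume $f\neq 0$. As $\im f$ is a torsion-free quotient of $E$ we get $\mu(\im f)\ge\mu$, and as a subsheaf of $F$ we get $\mu(\im f)\le\mu$; hence $\mu(\im f)=\mu$, and by additivity $\Ker f$ and $\Coker f$ also have slope $\mu$. All three are $\mu$-semistable (a slope-$\mu$ subsheaf of $E$ is destabilised only through subsheaves of $E$, and a slope-$\mu$ torsion-free quotient of $F$ only through quotients of $F$). Since $\im f$ and the torsion-free quotient $\Coker f$ are slope-$\mu$ torsion-free quotients of $E$, resp.\ of $F$, and any slope-$\mu$ torsion-free quotient of them is again one of $E$, resp.\ $F$, Proposition \ref{prop:EquivProp} gives them property $(SR)$. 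The main obstacle is to check that $\Coker f=F/\im f$ is torsion-free, i.e.\ that $I:=\im f$ is saturated in $F$; this is exactly where reflexivity is used. Writing $\overline I\subseteq F$ for the saturation, $\overline I/I$ is torsion of rank $0$ with $\deg(\overline I/I)\ge 0$; semistability of $F$ gives $\mu(\overline I)\le\mu$, so $\mu(\overline I)=\mu$ and $\deg(\overline I/I)=0$, i.e.\ $\overline I/I$ is supported in codimension $\ge 2$. But $I$ is reflexive (a slope-$\mu$ torsion-free quotient of $E$, so reflexive by Proposition \ref{prop:EquivProp}), and hence satisfies $S_2$: for $j\colon U\hookrightarrow X$ the inclusion of the complement of $\Supp(\overline I/I)$ one has $I\xrightarrow{\sim}j_*(I|_U)=j_*(\overline I|_U)\supseteq\overline I$, so $I=\overline I$ and $\Coker f$ is torsion-free, thus in $\mathcal C$.

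It remains to place $\Ker f$ in $\mathcal C$. Since $E/\Ker f\cong\im f$ is torsion-free, $\Ker f$ is a saturated slope-$\mu$ subsheaf of $E$, so it is enough to show that any saturated slope-$\mu$ subsheaf $E'$ of an object $E\in\mathcal C$ again lies in $\mathcal C$, which I would verify through the criterion of Proposition \ref{prop:EquivProp}. Given a slope-$\mu$ torsion-free quotient $E'\twoheadrightarrow B$ with kernel $B'\subseteq E'\subseteq E$, the exact sequence $0\to B\to E/B'\to E/E'\to 0$ exhibits $E/B'$ as an extension of the torsion-free sheaves $B$ and $E/E'$, hence torsion-free; being a slope-$\mu$ quotient of $E$ it is reflexive by Proposition \ref{prop:EquivProp}, so $B$ is reflexive by Remark \ref{rmk:reflexive}(2). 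Thus $E'\in\mathcal C$, in particular $\Ker f\in\mathcal C$, and $\mathcal C$ is abelian.

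Finally, every monomorphism $E'\hookrightarrow E$ in $\mathcal C$ has torsion-free cokernel $E/E'$, so each subobject of $E$ is a saturated slope-$\mu$ subsheaf. The category is noetherian because coherent sheaves satisfy the ascending chain condition on subsheaves. It is artinian because in a descending chain of saturated subobjects the ranks are non-increasing non-negative integers, hence eventually constant, and whenever $E'_{i+1}\subseteq E'_i$ have equal rank the quotient $E'_i/E'_{i+1}$ embeds into the torsion-free sheaf $E/E'_{i+1}$ while having rank $0$, so it vanishes and the chain stabilises.
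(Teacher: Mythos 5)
Your proof is correct and follows exactly the route the paper intends: the paper's own ``proof'' is simply the remark that the proposition is now immediate from Propositions \ref{prop:EquivProp} and \ref{prop:DirectSum} and Remark \ref{rmk:reflexive}, which are precisely the inputs you use. The one substantive step you supply that the paper leaves implicit --- torsion-freeness of the sheaf cokernel, via the degree count on $\overline{I}/I$ and the $S_2$ property of the reflexive image --- is the right way to make ``immediate'' honest, and your rank-induction argument for the artinian property is the standard one.
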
 

\begin{rmk}
Recall that in an abelian, noetherian and artinian category, any object admits a Jordan-H\"older (JH) filtration and the associated graded object is unique, \cite[Tag 0FCK]{stacks-project}. Therefore, when working with torsion-free sheaves enjoying property $(SR)$, resp. $(SLF)$, the notion of Seshadri filtration coincides with the notion of JH filtration in the corresponding categories. {This shows in particular the following:}
\end{rmk}

\begin{prop}\label{prop:equivDef}
If $E$ is a $\mu$-semistable sheaf on $X$ which admits some  Seshadri filtration whose corresponding graded module is reflexive, resp. locally free, then this property holds for any of its Seshadri filtrations. In this case all Seshadri graduations of $E$ are reflexive, resp. locally free, and isomorphic to one another. 
\end{prop}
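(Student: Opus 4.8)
My plan rests on a rigidity lemma: \emph{if $F$ is reflexive and slope-stable and $F'$ is torsion-free slope-stable with $\mu(F)=\mu(F')$, then every nonzero homomorphism $\phi\colon F\to F'$ is an isomorphism} (so that $F'$ is then automatically reflexive). I would prove this in the expected way. Stability of $F$ and of $F'$ forces $\ker\phi$ to have rank $0$, hence $\phi$ is injective, and then forces $\rk F=\rk F'$; comparing first Chern classes (equal slopes and equal ranks) shows that the torsion quotient $F'/F$ is supported in codimension $\ge 2$. The reflexivity of $F$ closes the argument: since double duals are insensitive to codimension-$\ge 2$ modifications, $F'^{\vee\vee}\cong F^{\vee\vee}=F$, and naturality of $E\mapsto E^{\vee\vee}$ makes the composite $F\hookrightarrow F'\hookrightarrow F'^{\vee\vee}\cong F$ an isomorphism; thus $\phi$ is split injective, and simplicity of the stable sheaf $F'$ gives $F'\cong F$. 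This is precisely where reflexivity is indispensable: for merely torsion-free stable factors the possible rank mismatch is exactly what permits non-isomorphic factors of equal slope and underlies the failure of uniqueness in general, cf. \cite[Example 3.1]{BTT2017}.

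Next I would record two elementary facts. First, a direct sum $\bigoplus_i F_i$ is reflexive if and only if each $F_i$ is reflexive, so the reflexivity of $\gr(E^\bullet)$ means exactly that every factor $E_i/E_{i-1}$ is reflexive. Second, an extension of a reflexive sheaf by a reflexive sheaf is again reflexive; this is immediate from the depth inequality for short exact sequences, since on the smooth variety $X$ the reflexive sheaves are exactly the torsion-free sheaves satisfying Serre's condition $S_2$. By induction along the filtration this already shows that any $E$ with property $(SR)$ is itself reflexive, although this will not be strictly needed below.

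The heart of the argument would be a Jordan--H\"older comparison, by induction on $\rk E$ (equivalently on the length $m$ of the given filtration). Let $E^\bullet$ be the given filtration with reflexive stable factors $F_i$, and let $\tilde E^\bullet\colon 0=\tilde E_0\subset\cdots\subset\tilde E_{m'}=E$ be any other Seshadri filtration, with stable factors $\tilde F_j=\tilde E_j/\tilde E_{j-1}$. Writing $F_1=E_1$, let $j_0$ be minimal with $E_1\subseteq\tilde E_{j_0}$; minimality makes the composite $E_1\hookrightarrow\tilde E_{j_0}\twoheadrightarrow\tilde F_{j_0}$ nonzero, so by the lemma it is an isomorphism and $\tilde F_{j_0}\cong F_1$ is reflexive. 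This isomorphism splits the sequence $0\to\tilde E_{j_0-1}\to\tilde E_{j_0}\to\tilde F_{j_0}\to 0$, giving $\tilde E_{j_0}=\tilde E_{j_0-1}\oplus E_1$. I would then slide $E_1$ to the bottom, replacing $\tilde E^\bullet$ by $0\subset E_1\subset E_1+\tilde E_1\subset\cdots\subset E_1+\tilde E_{j_0-1}=\tilde E_{j_0}\subset\tilde E_{j_0+1}\subset\cdots\subset E$; a short computation (using $E_1\cap\tilde E_k=0$ for $k<j_0$) identifies its factors as $F_1,\tilde F_1,\dots,\widehat{\tilde F_{j_0}},\dots,\tilde F_{m'}$, so its graded object agrees with that of $\tilde E^\bullet$. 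Passing to the quotient $\bar E=E/E_1$, both $E^\bullet$ and the new filtration induce Seshadri filtrations of the $\mu$-semistable sheaf $\bar E$, the first with reflexive factors $F_2,\dots,F_m$; the inductive hypothesis applied to $\bar E$ then shows the remaining $\tilde F_j$ are reflexive and that $\bigoplus_{i\ge 2}F_i\cong\bigoplus_{j\ne j_0}\tilde F_j$. Adding back $F_1\cong\tilde F_{j_0}$ yields $\gr(\tilde E^\bullet)\cong\gr(E^\bullet)$ with all factors reflexive, completing the induction.

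I expect the rigidity lemma to be the main obstacle, and within it the passage from ``$F'/F$ is supported in codimension $\ge 2$'' to ``$\phi$ is an isomorphism'': this is the one step that genuinely uses reflexivity and that must be argued through double duals and naturality rather than through slopes alone. The subsequent induction is formally the classical Jordan--H\"older argument, the only non-routine points being the splitting $\tilde E_{j_0}=\tilde E_{j_0-1}\oplus E_1$ and the Zassenhaus-type bookkeeping that identifies the factors after sliding $E_1$ to the bottom.
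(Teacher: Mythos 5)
Your proof is correct, but it takes a genuinely different route from the paper's. The paper's argument is a short reduction to the literature: it invokes the known fact that any two Seshadri graduations $G$, $\tilde G$ of $E$ have isomorphic double duals (\cite[Corollary 1.6.10]{HuybrechtsLehn}, \cite[Proposition A.2]{BTT2017}), so that $\tilde G^{\vee\vee}\cong G^{\vee\vee}\cong G$ once $G$ is reflexive; since $\tilde G$ and $\tilde G^{\vee\vee}\cong G$ then have the same topological type (namely that of $E$), the cokernel of the natural injection $\tilde G\hookrightarrow\tilde G^{\vee\vee}$ has class zero and hence vanishes, giving at once reflexivity of $\tilde G$ and $\tilde G\cong G$. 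What you do instead is essentially reprove, from scratch and in the reflexive setting, the comparison result that the paper cites: your rigidity lemma (a nonzero map from a reflexive stable sheaf to a torsion-free stable sheaf of equal slope is an isomorphism) is sound as sketched --- injectivity and equality of ranks from stability, codimension-two cokernel from equal degrees, then the commutative double-dual square --- and your Jordan--H\"older induction with the splitting $\tilde E_{j_0}=\tilde E_{j_0-1}\oplus E_1$ and the sliding step is the same kind of argument that underlies \cite[Corollary 1.6.10]{HuybrechtsLehn}, sharpened by reflexivity from ``isomorphic after double dualization'' to ``isomorphic''. The trade-off: the paper's route buys brevity by black-boxing the filtration bookkeeping and replacing your splitting/sliding analysis with a numerical-class computation, while yours buys self-containedness and makes visible exactly where reflexivity enters (the rigidity lemma, whence the splitting --- precisely the step that fails for merely torsion-free factors such as $\mathcal{I}_x\subset\mathcal{O}_X$, which is the source of non-uniqueness in \cite[Example 3.1]{BTT2017}). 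One cosmetic simplification: in your lemma, once the composite $F\to F'\to F'^{\vee\vee}\cong F^{\vee\vee}\cong F$ is known to be an isomorphism, the injection $F'\hookrightarrow F'^{\vee\vee}$ is forced to be onto, so $\phi$ is an isomorphism directly; the detour through split injectivity and simplicity of $F'$ is not needed.
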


\begin{cor}
    Any reflexive, resp. locally free, $\mu$-polystable sheaf on $X$ satisfies $(SR)$, resp. $(SLF)$.
\end{cor}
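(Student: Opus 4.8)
The plan is to exhibit an explicit Seshadri filtration whose associated graded recovers the sheaf itself. By definition a $\mu$-polystable sheaf $E$ decomposes as a finite direct sum $E \cong \bigoplus_{i=1}^{m} E_i$ of $\mu$-stable sheaves, each of slope $\mu(E_i) = \mu(E)$. I would set $F_j \coloneqq \bigoplus_{i=1}^{j} E_i$ for $0 \le j \le m$, giving a filtration
\[
    0 = F_0 \subset F_1 \subset \cdots \subset F_m = E.
\]
The successive quotients satisfy $F_j / F_{j-1} \cong E_j$, which is $\mu$-stable of slope $\mu(E)$; hence this is a genuine Seshadri filtration of $E$.

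The key observation is then that the associated graded module is
\[
    \gr(F_\bullet) = \bigoplus_{j=1}^{m} F_j/F_{j-1} \cong \bigoplus_{j=1}^{m} E_j \cong E.
\]
Since $E$ is assumed reflexive (resp. locally free), this graded module is reflexive (resp. locally free), so $E$ satisfies $(SR)$ (resp. $(SLF)$) by Definition \ref{def:Seshadri}. By Proposition \ref{prop:equivDef} the same property then holds automatically for every Seshadri filtration of $E$, although this is not needed for the statement.

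I do not anticipate any real obstacle: the content of the corollary is entirely carried by the direct-sum structure of polystable sheaves, which makes the diagonal filtration available with associated graded isomorphic to $E$. The only point requiring a line of justification is that the partial sums $F_j$ form a valid Seshadri filtration, and this is immediate from the equality of slopes and the stability of each summand $E_i$. An alternative route would invoke Proposition \ref{prop:EquivProp}: one checks that any torsion-free quotient $F$ of $E$ with $\mu(F)=\mu(E)$ is, by semisimplicity in the category of $\mu$-semistable sheaves of slope $\mu(E)$, a direct summand of $E$, and direct summands of reflexive (resp. locally free) sheaves are again reflexive (resp. locally free). The filtration argument is shorter and more transparent, so I would present that as the main proof and relegate the quotient characterization to a remark.
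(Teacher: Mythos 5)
Your proof is correct and is essentially the argument the paper has in mind: the corollary is stated there without proof as an immediate consequence, and the intended justification is precisely that the partial-sum filtration of the polystable decomposition is a Seshadri filtration whose graded module is isomorphic to $E$ itself, hence reflexive (resp.\ locally free) by hypothesis. Your alternative route via Proposition \ref{prop:EquivProp} and closure under extensions (Propositions \ref{prop:DirectSum}, \ref{prop:abelianCategory}) is also sound, but the filtration argument you lead with is the right one to present.
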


\begin{example}
    Let $X$ be an abelian surface, $x$ be a closed point on $X$ and $L$ be a non-trivial element in $\Pic^0(X)$. A standard computation using Serre's locally freeness criterion shows that the middle term $E$ of any non-trivial extension
$$0\to L\to E \to \cI_x\to0$$
is locally free, cf. \cite[Section I.5]{OSS}, \cite[Section I.2]{Toma_dissertation}. In this case $E$ is reflexive, it is slope semistable with respect to any ample polarization on $X$ but it is not reflexively Seshadri filtered.
\end{example}

Semistable sheaves in the sense of Gieseker and Maruyama are automatically slope-semistable and it is known that the converse is not true in general. The following example shows that slope-semistable sheaves need not be GM-semistable even when they are moreover supposed  to have property $(SLF)$. 
\begin{example}\label{ex:GM}
Let $(X,H)$ be a polarized $K3$ surface over an algebraically closed field of characteristic zero. Then its tangent sheaf $\cT_X$ is slope-stable of slope zero, see \cite[Proposition 9.4.5]{huybrechtsK3}, and so is $\cO_X$ too. Their direct sum $\cO_X\oplus \cT_X$ is slope-polystable, has property $(SLF)$, but is not GM-semistable, since the Hilbert polynomials $P_{\cO_X}(m)= \frac{m^2}{2}h^2+2$ and  $P_{\cT_X}(m)= m^2h^2-20$ are not proportional. Note that over complex non-algebraic $K3$ surfaces the same example works; in this case one defines an analogue of the Hilbert polynomial with respect to some K\"ahler class. 
\end{example}
Similar examples may be constructed in rank two over polarized $K3$ surfaces with more special properties.  

\section{Openness of the properties (SR) and (SLF)}

\begin{lem}\label{lem:torsion-freeQuotient} 
Let $R$ be a discrete valuation ring  over $k$ with quotient field $K$ and residue field $k$, let $E$ be a  coherent sheaf on $X\times \Spec(R)$ with slope $\mu$, flat over $R$ and such that $E_k$ has property $(SR)$, resp. $(SLF)$. Let further $F$ be a quotient of $E$ flat over $R$ such that $F_K$ is torsion-free and has slope $\mu$. Then $F_k$ is torsion-free and is therefore even reflexive, resp. locally free. 
\end{lem}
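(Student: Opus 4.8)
The plan is to reduce first to the special fibre and then to isolate the behaviour in codimension two. Since $F$ is $R$-flat, tensoring the surjection $E\to F$ with the residue field $k$ yields a surjection $E_k\twoheadrightarrow F_k$ (flatness kills $\Tor_1^R(F,k)$), so $F_k$ is a quotient of the sheaf $E_k$, which by hypothesis is $\mu$-semistable of slope $\mu$ with property $(SR)$. Let $T\subseteq F_k$ be the torsion subsheaf and put $\bar F:=F_k/T$, a torsion-free quotient of $E_k$. Flatness forces $\rk F_k=\rk F_K$ and $c_1(F_k)\cdot H^{n-1}=c_1(F_K)\cdot H^{n-1}$, so $\mu(F_k)=\mu(F_K)=\mu$. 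As $\bar F$ is a torsion-free quotient of the $\mu$-semistable $E_k$ we have $\mu(\bar F)\ge\mu$, while $c_1(F_k)=c_1(\bar F)+c_1(T)$ with $c_1(T)\cdot H^{n-1}\ge0$ gives
\[
    \mu(\bar F)=\mu-\frac{c_1(T)\cdot H^{n-1}}{\rk F_k}\le\mu .
\]
Hence $c_1(T)\cdot H^{n-1}=0$, i.e. $T$ is supported in codimension $\ge2$, and $\mu(\bar F)=\mu$. Proposition \ref{prop:EquivProp}, applied to the torsion-free slope-$\mu$ quotient $\bar F$ of $E_k$, then shows that $\bar F$ is reflexive (resp. locally free). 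It remains to prove $T=0$.

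Next I would transport the problem to the integral total space $\fX:=X\times\Spec R$. Because $F$ is $R$-flat with $F_K$ torsion-free, the uniformiser of $R$ is a nonzerodivisor on $F$ and every associated point of $F$ lies in the generic fibre, where $F_K$ is torsion-free; hence $F$ is torsion-free on $\fX$. Consequently $N:=\ker(E\to F)$ is again $R$-flat and is saturated in $E$ over $\fX$. Using that $E_k$ is reflexive, the depth formula across the Cartier divisor $X_k\subset\fX$ (together with Remark \ref{rmk:reflexive}) shows that $E$, and then $N$, is reflexive along the special fibre. Restricting the sequence to $X_k$ gives $0\to N_k\to E_k\to F_k\to0$, and one checks that $F_k$ is torsion-free if and only if $N_k$ is saturated in $E_k$, equivalently $N_k$ is reflexive, equivalently $N_k=N_k^{\vee\vee}=N_k'$, where $N_k':=\ker(E_k\to\bar F)$ is reflexive and satisfies $N_k'/N_k\cong T$.

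The crux is therefore to show that the reflexive sheaf $N$ on $\fX$, flat over $R$, restricts to a \emph{reflexive} — not merely torsion-free — sheaf on $X_k$, i.e. that forming the reflexive hull commutes with passing to the special fibre here. I would attack this numerically: flatness gives $P(N_k)=P(N_K)$, while $N_k\subseteq N_k'$ with quotient $T$ gives $P(N_k')=P(N_k)+P(T)$, so that $T=0$ is equivalent to the equality of Hilbert polynomials $P(N_K)=P(N_k')$. Both $N_K$ and $N_k'$ are reflexive and $\mu$-semistable of slope $\mu$, with the same rank and the same $c_1$, and they occur as the kernels in the slope-$\mu$ sequences on the two fibres. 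I would try to compare their discriminants by combining the Bogomolov inequality — available since the characteristic is zero — for the $\mu$-semistable sheaves $N_K,\ \bar F,\ N_k',\ F_K$ with the constancy of Chern characters in the flat family, reducing if necessary to the surface case by Mehta--Ramanathan restriction to general complete intersections $X\cap H_1\cap\cdots\cap H_{n-2}$ in order to detect the codimension-two part of $T$.

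The step I expect to be the main obstacle is exactly this last one, the vanishing of the codimension-$\ge2$ torsion. Purely local (depth) reasoning cannot suffice, since a reflexive sheaf on $\fX$ that is flat over $R$ may restrict to a non-reflexive, though torsion-free, sheaf on $X_k$ — syzygy bundles furnish such examples — so the \emph{global} $\mu$-semistability of $E_k$, and not merely its reflexivity, has to be used in an essential way, and this is where the characteristic-zero hypothesis genuinely enters. Establishing $P(N_K)=P(N_k')$ is thus the heart of the matter; once it is known, $T=0$ and $F_k$ is torsion-free, and being then a torsion-free slope-$\mu$ quotient of the sheaf $E_k$ with property $(SR)$, resp. $(SLF)$, it is reflexive, resp. locally free, by Proposition \ref{prop:EquivProp}, as asserted.
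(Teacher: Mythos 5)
The first half of your argument is sound and coincides with the opening of the paper's proof: by flatness $\mu(F_k)=\mu$, the torsion-free part $G^0:=F_k/\Tors(F_k)$ is a slope-$\mu$ quotient of $E_k$, hence reflexive (resp.\ locally free) by Proposition \ref{prop:EquivProp}, and the torsion $B^0:=\Tors(F_k)$ sits in codimension $\ge 2$. But from that point on you have a genuine gap, and you say so yourself: the vanishing of the codimension-$\ge 2$ torsion is exactly the content of the lemma, and the strategy you sketch for it cannot close the gap. Reducing to the equality of Hilbert polynomials $P(N_K)=P(N_k')$ and then invoking the Bogomolov inequality fails for a directional reason: writing $N_k\subset N_k'$ with quotient $B^0$, one only gets $\Delta(N_k')\le\Delta(N_k)=\Delta(N_K)$, and Bogomolov gives $\Delta(N_k')\ge 0$ and $\Delta(N_K)\ge 0$; these inequalities are all simultaneously compatible with $B^0\ne 0$, so no contradiction can come out of them, with or without Mehta--Ramanathan restriction. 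Indeed no purely fibre-wise numerical comparison can work, since without property $(SR)$ the statement is simply false (flat limits of torsion-free quotients do acquire codimension-two torsion in general); what must be exploited is the interaction between the reflexivity of $G^0$ and the geometry of the family over $\Spec(R)$, not just invariants of the special fibre. A side remark: the characteristic-zero hypothesis does \emph{not} enter here --- this lemma lives in Section 3 of the paper, before the standing characteristic-zero assumption, and its proof is characteristic-free --- so your reliance on Bogomolov would also needlessly restrict the generality.

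The missing idea is Langton's elementary transformation argument. Assume $B^0\ne 0$ and set $F^1:=\Ker(F\to G^0)$ inside the family over $X_R$. Restricting to the special fibre produces the two exact sequences $0\to B^0\to F_k\to G^0\to 0$ and $0\to G^0\to F^1_k\to B^0\to 0$. With $B^1:=\Tors(F^1_k)$, the composite injection $G^0\to F^1_k/B^1$ has bidual $G^0\to (F^1_k/B^1)^{\vee\vee}$ an isomorphism by the normality property of reflexive sheaves --- this is precisely where property $(SR)$, through the reflexivity of $G^0$, does its work --- and it factors through $F^1_k/B^1$; hence $G^0\cong F^1_k/B^1$, the second sequence splits, and $F^1_k\cong G^0\oplus B^0$ with $B^1\cong B^0$. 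Iterating ($G^1:=F^1_k/B^1$, $F^2:=\Ker(F^1\to G^1)$, \dots) the sequences $(G^n)$ and $(B^n)$ are stationary, and Langton's descending-chain argument (as in the proof of Theorem 2.B.1 in Huybrechts--Lehn, or Langton's original paper) then yields a contradiction. So the torsion is forced to vanish by the persistence of the splitting along the infinite Langton process, not by any inequality between characteristic classes of the two fibres.
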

\begin{proof}
We have $\mu(F_k)=\mu(F_K)=\mu$  by flatness. We write $B^0:=\Tors (F_k)$ for the torsion of $F_k$ and we suppose by contradiction that $B^0\ne0$. Set $G^0:=F_k/B^0$, the torsion-free part of $F_k$.  Then $\mu(G^0)\le\mu(F_k)=\mu=\mu(E_k)$ on one hand, and on the other,   $\mu(E_k)\le \mu(G^0)$ by the semistability of $E_k$. Thus $\mu(G^0)=\mu(E_k)$ and so, $G^0$ must be reflexive  by Proposition \ref{prop:EquivProp} and  $\codim _X(B^0)\ge 2$. 

We will next apply to $F$ the first step of Langton's method aimed at producing a subsheaf $F'$ of $F$, flat over $R$, whith $F'_K=F_K$ and  $F'_k$  torsion-free, cf. \cite[Exercise 2.B.2]{HuybrechtsLehn}.  This goes as follows, cf. \cite[Proof of Theorem 2.B.1]{HuybrechtsLehn}.
 Put $F^1:=\Ker(F\to G^0)$.
We get two exact sequences
$$0\to B^0\to F_k\to G^0\to 0$$
and
$$0\to G^0\to F_k^1\to B^0\to 0.$$
{The second one is obtained using the inclusions $\pi F \subset F^1 \subset F$ and applying the Snake Lemma to the commutative diagram} 
\[
    \begin{tikzcd}
        0 \ar[r] & F^1 \ar[r,"\cdot \pi"] \ar[d] & F^1 \ar[r] \ar[d] & F^1_k \ar[d] \ar[r] & 0 \\
        0 \ar[r] & F \ar[r,"\cdot \pi"] & F \ar[r] & F_k \ar[r] & 0.
    \end{tikzcd}
\]
Set $B^1:=\Tors(F^1_k)$. Then we get an injective morphism 
$$G^0\to F_k^1/B^1$$
whose bidual
$$G^0\to (F_k^1/B^1)^{\vee\vee}$$
is an isomorphism by the normality property of reflexive sheaves
and factorizes through $F_k^1/B^1$. Thus $G^0\to F_k^1/B^1$ is an isomorphism and it follows that the exact sequence $0\to G^0\to F_k^1\to B^0\to 0$ splits and therefore
$F^1_k\cong G^0\oplus B^0$ and $B^1\cong B^0$. 

If we continue the process by setting 
$G^1:=F^1_k/B^1$, $F^2:=\Ker(F^1\to G^1)$, $B^2:=\Tors(F^2_k)$, and so on, we see as above that the sequences $(G^n)_{n\in\N}$ and $(B^n)_{n\in\N}$ remain stationary and the argument in \cite[Proof of Theorem 2.B.1]{HuybrechtsLehn} or in \cite{langton1975valuative} leads to a contradiction, see also \cite[Proposition 3.3]{toma2020criteria}. This proves the Lemma.
\end{proof}

\begin{rmk}\label{rmk:analyticLemma}
An analogue of Lemma \ref{lem:torsion-freeQuotient} holds in the complex analytic setup, following the proof of \cite[Theorem 3.1]{toma2020criteria}. That is, for a compact K\"ahler manifold $(X,\omega)$, a family $E$ flat over the unit disk $\Delta$ with fibers of slope $\mu$ on $X$ such that $E_0$ has property $(SR)$, resp. $(SLF)$, and a quotient of $E \to F$ flat over $\Delta$ such that $F_t$ is torsion-free of slope $\mu$ for all $t \in \Delta \setminus \{0\}$, one has that $F_0$ is torsion-free and therefore reflexive, resp. locally free. 
\end{rmk}

\begin{prop}\label{prop:openness}
    The properties $(SR)$ and $(SLF)$ are open in flat families of sheaves.
\end{prop}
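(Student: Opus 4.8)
The plan is to reduce openness to a valuative statement and then feed it into Lemma \ref{lem:torsion-freeQuotient}. Slope-semistability is itself open in flat families (Maruyama, see \cite[Section 2.3]{HuybrechtsLehn}), and by definition $(SR)$ and $(SLF)$ refine it, so I may restrict the base and assume that the given flat family $E$ over a noetherian base $S$ has all geometric fibres slope-semistable of slope $\mu$. Let $U\subseteq S$ be the locus over which the fibre satisfies $(SR)$, resp. $(SLF)$. Using Proposition \ref{prop:EquivProp}, the fibrewise description of $U$ together with standard constructibility arguments (generic flatness and noetherian induction, applied to a relative Seshadri filtration over a stratification of $S$) shows that $U$ is constructible. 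A constructible set is open precisely when it is stable under generisation, so it suffices to prove: over any trait, if the closed fibre satisfies $(SR)$, resp. $(SLF)$, then so does the generic fibre.

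Thus let $R$ be a discrete valuation ring with fraction field $K$ and residue field $k$, equipped with a morphism $\Spec R\to S$ hitting $U$ at the closed point, and let $E$ denote the pulled-back family on $X\times\Spec(R)$, flat over $R$, of slope $\mu$, whose special fibre $E_k$ satisfies $(SR)$. By Proposition \ref{prop:EquivProp} I must show that every torsion-free quotient $q\colon E_K\twoheadrightarrow F_K$ with $\mu(F_K)=\mu$ is reflexive, resp. locally free. Given such a quotient, I spread it out: writing $j\colon X_K\hookrightarrow X\times\Spec(R)$ for the inclusion of the generic fibre, put $F:=E/\Ker(E\to j_*F_K)$. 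Then $F$ is a quotient of $E$ that embeds into $j_*F_K$, hence is $R$-torsion-free and therefore flat over $R$, and its generic fibre recovers the given $F_K$, which is torsion-free of slope $\mu$. Now Lemma \ref{lem:torsion-freeQuotient} applies and yields that the special fibre $F_k$ is torsion-free.

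It remains to promote torsion-freeness of $F_k$ to reflexivity, resp. local freeness, of $F_K$. Here I pass to the total space $X\times\Spec(R)$, which is regular. Since $F$ is flat over $R$, the uniformiser is a nonzerodivisor on $F$, giving the depth identity $\depth F=\depth F_k+1$ at points of the special fibre; as $F_k$ is torsion-free ($S_1$) this forces $F$ to satisfy Serre's condition $S_2$, resp. to be locally free, at every point of $X_k$. Consequently the locus where $F$ fails to be reflexive, resp. locally free, is a closed subset of $X\times\Spec(R)$ disjoint from the special fibre $X_k$. But $X$ is projective, so $X\times\Spec(R)$ is proper over $R$ and each of its closed points lies in $X_k$; a nonempty closed subset would contain such a point, a contradiction. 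Hence $F$ is reflexive, resp. locally free, on all of $X\times\Spec(R)$, and restricting to the open generic fibre shows that $F_K$ is reflexive, resp. locally free. As $F_K$ was arbitrary, Proposition \ref{prop:EquivProp} gives that $E_K$ satisfies $(SR)$, resp. $(SLF)$, completing the generisation step and the proof.

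The geometric substance is carried by Lemma \ref{lem:torsion-freeQuotient}, whose Langton-type argument secures torsion-freeness of $F_k$; the two places that demand care are the constructibility of $U$, where one must control all slope-$\mu$ quotients simultaneously in families (most cleanly via relative Seshadri filtrations over a stratification), and the passage from the special to the generic fibre of $F$, which I expect to be the main obstacle and which hinges on combining the flatness depth identity with the properness of $X\times\Spec(R)$ to eliminate the non-reflexive locus.
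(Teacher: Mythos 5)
Your overall architecture (reduce to a generization statement over a DVR, spread out a slope-$\mu$ torsion-free quotient $F_K$ to an $R$-flat quotient $F$, and feed it into Lemma \ref{lem:torsion-freeQuotient}) is sound and is essentially the mirror image of the paper's argument, which proves instead that the complement is stable under specialization via the relative Quot scheme. But your final step --- promoting ``$F_k$ reflexive'' to ``$F_K$ reflexive'' --- has a genuine gap in the $(SR)$ case. The depth identity $\depth F_x = \depth (F_k)_x + 1$ only controls depth at the \emph{maximal ideal} of $\cO_{X_R,x}$ for $x \in X_k$. In the $(SLF)$ case that suffices: Auslander--Buchsbaum gives $\pd_{\cO_{X_R,x}} F_x = 0$, so $F$ is free along $X_k$, and your properness argument then works. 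In the $(SR)$ case, however, reflexivity of the stalk $F_x$ over the regular local ring $A = \cO_{X_R,x}$ is \emph{not} a condition on $\depth_A F_x$ alone: for a torsion-free module it is equivalent to $\depth_{A_{\mathfrak p}} F_{\mathfrak p} \ge 2$ at \emph{every} prime $\mathfrak p$ of height $\ge 2$, and these primes include the generizations of $x$ lying in the generic fibre $X_K$ --- precisely where you know nothing. Concretely, $M = (x,y) \subset A = k[[x,y,z]]$ is torsion-free with $\depth_A M = 2$, yet $M^{\vee\vee} = A \ne M$, the failure occurring at the height-two prime $(x,y)$. So ``the non-reflexive locus is disjoint from $X_k$'' does not follow from what you established; indeed the missing input, the $S_2$ condition at codimension-$\ge 2$ points of $X_K$, is exactly the statement $F_K$ reflexive that you are trying to prove, so the argument is circular there.

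The fix is available and is in the paper's own toolkit: reflexivity is an \emph{open} property in flat families (the paper uses this to see that the non-reflexive locus $Z$ is closed in the Quot scheme). Applied to the $R$-flat family $F$, whose special fibre is reflexive by Lemma \ref{lem:torsion-freeQuotient}, the reflexive locus in $\Spec R$ is open and contains the closed point, hence the generic point, so $F_K$ is reflexive; no total-space depth argument is needed. Separately, your constructibility step is only asserted: ``a relative Seshadri filtration over a stratification'' is delicate, since Seshadri filtrations are neither unique nor functorial in families, and making this precise essentially forces the mechanism the paper uses anyway --- Grothendieck's lemma to bound the Hilbert polynomials of slope-$\mu$ torsion-free quotients, then the relative Quot scheme $\varphi : Q \to S$, so that the non-$(SR)$ locus is $\varphi(Z \setminus Z')$ and Chevalley gives constructibility for free, with the valuative argument (yours, or the paper's specialization version) supplying the topological half.
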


\begin{proof}
Let $S$ be a scheme of finite type over $k$, and let $\cF$ be an $S$-flat family of sheaves on $X$. Since the property of being slope-semistable is open, cf. \cite[Theorem 1.7]{maruyama81on}, we may assume that all fibers $\cF_s$ over $S$ are slope-semistable with slope $\mu$. Consider the set
\begin{align*}
     H = \{ P(F') \in \Q[T] :{}& \cF_s \to F' \text{ is a torsion-free quotient for a } \\
      {}& \text{geometric point $s \in S$ such that $\mu(F') = \mu$
      }\}.
\end{align*}
By Grothendieck's Lemma (see \cite[Lemma 1.7.9]{HuybrechtsLehn}), the set $H$ is finite. Consider the relative Quot scheme $\varphi : Q :=\Quot(\cF,H) \to S$ of quotients $[\cF_s \to F']$ with $P(F') \in H$, and let $Z \subset Q$ be the
subset of non-reflexive quotients in $Q$. As the property of being reflexive is open in flat families, we see that $Z$ is closed in $Q$, and so $Z$ is proper over $S$. Therefore the scheme-theoretic image $\varphi(Z)$ is closed in $S$. 

Let $Z'$ be the closed subscheme of $Z$ corresponding to quotients of $\cF$ which are not torsion-free. By Proposition \ref{prop:EquivProp} the subset of $S$ parameterizing fibers not satisfying property $(SR)$ is exactly $\varphi(Z\setminus Z')$. Thus it suffices to show that $Z\setminus Z'$ is proper over $S$. To do this we will use the valuative criterion of properness and Lemma \ref{lem:torsion-freeQuotient}. Let $R$ be a DVR over $k$ with quotient field $K$, and consider a commutative diagram such as follows
\[
    \xymatrix{
        \Spec(K) \ar[r] \ar[d]^\iota & Z \setminus Z' \ar[d] \ar@{^{(}->}[r]  & Z \ar[ld]^\varphi\\
        \Spec(R) \ar[r] & S
    }
\]
where $\iota$ is given by the inclusion $R \subset K$. As $Z$ is proper over $S$, there exists a unique morphism $f: \Spec(R) \to Z$ such that the above diagram with $f$ added remains commutative. As $Z$ is a subset of the relative Quot scheme, $f$ corresponds to an $R$-flat family $\cF_R \to \cE$ of quotients with slope $\mu$. Let us assume by contradiction that $f$ does not factorize through $Z \setminus Z'$, which implies that $\cF_k$ satisfies $(SR)$. Furthermore we see that $\cE_K = \cE \otimes_R K$ is torsion-free, since $\cE_K$ corresponds to the morphism $\Spec(K) \to Z \setminus Z'$. Thus, by Lemma \ref{lem:torsion-freeQuotient}, we obtain that $\cE_k$ is reflexive, which gives a contradiction.
\end{proof}

\begin{rmk}
The Zariski-openness of the $(SR)$ and $(SLF)$ properties also holds in the K\"ahler analytic setup. 
\begin{proof}[Sketch of proof]
Let $(X,\omega)$ be a compact K\"ahler manifold, $S$ a complex space and $\cF$ an $S$-flat family of sheaves of slope $\mu$ on $X$. We consider similarly to the proof of Proposition \ref{prop:openness} the union $Q$ of the irreducible components of the relative Douady space over $S$ of quotients of $\cF$ such that each such irreducible component contains a torsion-free quotient of slope $\mu$. By \cite[Corollary 6.3]{toma2021boundedness} the natural map $\varphi: Q \to S$ is proper. Let $Z \subset Q$ be the closed subset of non-reflexive quotients, and $Z' \subset Z$ the closed subset of quotients with torsion. As before, it is enough to show that $\varphi(Z \setminus Z')$ is closed in $S$. 

Let $s \in S \setminus \varphi(Z \setminus Z')$ be a point. If $s \notin \varphi(Z)$ then $S \setminus \varphi(Z)$ is a Zariski-neighbourhood of $s$. Suppose now that $s$ is in $\varphi(Z)$. By restricting $S$ to a Zariski-open neighbourhood of $s$, we may assume that all irreducible components of $Z$ intersect the fiber $\varphi^{-1}(s)$ non-trivially. Now let $T \subset Z$ be an irreducible component of $Z$ and let $z$ be a point in $T \cap \varphi^{-1}(s)$. Since $s \notin \varphi(Z \setminus Z')$, we have $\varphi^{-1}(s) \cap Z = \varphi^{-1}(s) \cap Z'$. Let $T' \subset Z' \cap T$ be an irreducible component of $Z'$ passing through $z$. We will show that $T' = T$, which will imply that $Z' = Z$, thus finishing the proof.

Suppose by contradiction that $T' \ne T$.  Then there exists a holomorphic curve $f: \Delta \to T$ such that $f(0) = z$ and $f(\Delta \setminus \{0\}) \subset T \setminus T'$, where $\Delta$ denotes the unit disk in $\C$. This contradicts Remark \ref{rmk:analyticLemma}.
\end{proof}

\end{rmk}

\section{Existence of good moduli spaces}

{
Here and in the next section we assume that the characteristic of the base field $k$ is zero. {Consider the stack $\Cohs_X$ of flat families of coherent sheaves on $X$ \cite[Tag 08KA]{stacks-project}, and fix a numerical class $\gamma \in K_{num}(X)$ corresponding to a torsion-free sheaf on $X$. We denote by $\Cohs_{X,\gamma}^{(SR)},\Cohs_{X,\gamma}^{(SLF)} \subset \Cohs_X$ the substacks of slope-semistable torsion-free sheaves of class $\gamma$ on $X$ which satisfy property $(SR)$, resp. $(SLF)$}. In this section we shall first show that $\Cohs_{X,\gamma}^{(SR)}$ admits a separated good moduli space (in the sense of Alper \cite{alper2013good}), then as a consequence we will obtain the existence of a good moduli space for $\Cohs_{X,\gamma}^{(SLF)}$ as well. 

We will use the following criterion for the existence of good moduli spaces:

\begin{thm}[see Theorem A in \cite{AlperHLH}]\label{thm:AHLH}
Let $\fM$ be an algebraic stack of finite presentation, with affine automorphism groups and separated diagonal over a noetherian algebraic space $S$ of
characteristic 0. Then $\fM$ admits a separated good moduli space if and only if is $\fM$ is $\Theta$-reductive and $S$-complete.
\end{thm}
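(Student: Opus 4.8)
This statement is quoted from \cite{AlperHLH}, so rather than a self-contained argument I will only indicate the strategy one would follow to establish it. The plan is to treat the two implications separately. The ``only if'' direction is the formal one: granting a separated good moduli space $\pi\colon \fM \to M$, one deduces $\Theta$-reductivity and $S$-completeness from the valuative criteria for $M$. A map $\Theta_K \to \fM$, or one out of the punctured $\ST_R$, is composed with $\pi$ to give a map to the separated algebraic space $M$; the universal closedness of a good moduli space morphism produces the extension over $\Theta_R$ (resp.\ $\ST_R$), while separatedness of $M$ makes it unique. The only points needing care are that the extension lands back in $\fM$ and is compatible with the $\G_m$-action, which is formal.

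The substance lies in the ``if'' direction, where $M$ must be constructed from the local geometry of $\fM$. First I would use that $\mathrm{char}\,k = 0$: combining the two valuative hypotheses one shows that every point of $\fM$ with closed orbit has reductive, hence linearly reductive, stabilizer $G_x$. This is the input required by the local structure theorem of Alper--Hall--Rydh, which I would invoke next to produce, around each such point, an \'etale morphism $[\Spec A / G_x] \to \fM$. For each of these charts the good moduli space is known to exist by Alper's original theorem, namely the affine quotient $\Spec A^{G_x}$.

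The heart of the argument, and the step I expect to be the main obstacle, is gluing the local quotients $\Spec A^{G_x}$ into a single separated algebraic space $M$ carrying a good moduli space morphism $\pi\colon \fM \to M$. The difficulty is that passing to invariants need not respect overlaps of charts: one must show the induced maps on quotients are again \'etale and that they form an atlas. This is exactly where the hypotheses enter. $\Theta$-reductivity guarantees that one-parameter degenerations always admit unique limits inside $\fM$, so that the closed orbits in each chart are intrinsic to $\fM$ and the charts are saturated under specialisation; this is what allows the invariant quotients to glue rather than merely forming an \'etale groupoid. $S$-completeness then forbids a DVR family from having two distinct limits, which translates into the diagonal of $M$ being a closed immersion, i.e.\ separatedness. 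Assembling these, one verifies that $\{\Spec A^{G_x}\}$ is an \'etale atlas of an algebraic space $M$ and that $\pi$ satisfies Alper's axioms, $\pi_* \cO_{\fM} = \cO_M$ together with universal closedness. The delicate part throughout is checking that the chart morphisms stay \'etale after taking invariants and that $\Theta$-reductivity and $S$-completeness are precisely strong enough to close the diagonal; once these are in hand, the remaining verifications are an assembly of known results.
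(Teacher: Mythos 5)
The paper gives no proof of this statement at all---it is imported verbatim as Theorem A of \cite{AlperHLH}---and your proposal likewise defers to that reference, so at the level of this paper your approach is the same: the theorem is a black-box input (used later with Propositions \ref{prop:ThetaReductive} and \ref{prop:Scomplete} to produce the moduli space). Your sketch of the AHLH argument is moreover a faithful outline of how that reference actually proceeds (in characteristic zero, S-completeness forces reductive, hence linearly reductive, stabilizers at closed points; the Alper--Hall--Rydh local structure theorem gives \'etale charts $[\Spec A/G_x]$ whose good moduli spaces $\Spec A^{G_x}$ exist by Alper's affine theory; and $\Theta$-reductivity and S-completeness are exactly what make the charts glue into a separated algebraic space), so no correction is needed.
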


We note that $\Cohs_{X,\gamma}^{(SR)} \subset \Cohs_X$ is an open substack by Proposition \ref{prop:openness}. Thus $\Cohs_{X,\gamma}^{(SR)}$ is an algebraic stack locally of finite presentation and with affine diagonal over $k$, since $\Cohs_X$ is so by \cite[Tag 09DS,Tag 0DLY]{stacks-project}. In particular, $\Cohs_{X,\gamma}^{(SR)}$ has affine automorphism groups and separated diagonal. We also know that $\Cohs_{X,\gamma}^{(SR)}$ is quasi-compact, since the set of $\mu$-semistable sheaves of class $\gamma$ on $X$ is bounded (see \cite[Theorem 1.1]{simpson1994moduli}). According to Theorem \ref{thm:AHLH}, in order to show that $\Cohs_{X,\gamma}^{(SR)}$ admits a separated good moduli space, it is enough to check that $\Cohs_{X,\gamma}^{(SR)}$ is both $\Theta$-reductive and $S$-complete. We shall see this below, after we recall the definition of $\Theta$-reductivity and $S$-completeness (following \cite{AlperHLH}).

\subsection{$\Theta$-reductivity}

Let $\Theta = [\A^1/\G_m]$ be the quotient stack corresponding to the standard contracting action of the multiplicative group $\G_m$ on the affine line $\A^1$ over $k$.

\begin{defn}[see Definition 3.10 in \cite{AlperHLH}]
A noetherian algebraic stack $\fM$ is \emph{$\Theta$-reductive} over $k$ if for every $R$ DVR over $k$, any commutative diagram
\[
\xymatrix{ \Theta_R \setminus \{0\} \ar[r] \ar[d] &  \fM \ar[d] \\  \Theta_R \ar[r] \ar@{-->}[ru] &  \Spec k }
\]
of solid arrows can be uniquelly filled in, where $0 \in \Theta_R \coloneqq \Theta \times \Spec R$ is the unique closed point.
\end{defn}

Let $R$ be a DVR over $k$ with quotient field $K$. Then the closed point of $\Theta_R$ is defined by $0 = [0/\G_m] \cong \BG_m$, where $\BG_m = [\text{pt}/\G_m]$ is the classifying stack of $\G_m$ over $k$. Moreover, note that $\Theta_R \setminus \{0\}$ is described by the following union
\[
    \Theta_R \setminus \{0\} =  ([(\A^1 \setminus \{0 \})/\G_m] \times \Spec R) \cup_{\Spec K} \Theta_K,
\]
with $[(\A^1 \setminus \{0\})/\G_m] \times \Spec R \cong \Spec R$ and $\Theta_K$ open substacks of $\Theta_R$. Thus, a morphism $\Theta_R \setminus \{0\} \to \Cohs_{X,\gamma}^{(SR)}$ corresponds to a commutative diagram as follows:
\[
\xymatrix{ \Spec R \ar[r] &  \Cohs_{X,\gamma}^{(SR)}  \\  \Spec K \ar[r] \ar[u] &  \Theta_K \ar[u] }
\]
We can think of a morphism $f: \Theta_K \to \Cohs_{X,\gamma}^{(SR)}$ as an object $f(1) = E_K \in \Cohs_{X,\gamma}^{(SR)}(K)$ together with a filtration $E_K^\bullet$ such that $f(0) = \gr(E_K^\bullet) \in \Cohs_{X,\gamma}^{(SR)}(K)$ (see \cite[Section 2.3]{AlperHLH} for more details).

Putting together the above considerations, we obtain the following characterization of $\Theta$-reductivity (see also \cite[Proposition 3.6]{weissmann2023stacky}): $\Cohs_{X,\gamma}^{(SR)}$ is $\Theta$-reductive if and only if for 
\begin{enumerate}
    \item every DVR $R$ over $k$ with quotient field $K$ and residue field $k$,
    \item every family $E \in \Cohs_{X,\gamma}^{(SR)}(R)$, and
    \item every filtration $E_K^\bullet$ of $E_K$ with $\gr(E_K^\bullet) \in \Cohs_{X,\gamma}^{(SR)}(K)$,
\end{enumerate}
the (uniquely) induced filtration $E_k^\bullet$ of $E_k$ satisfies $\gr(E_k^\bullet) \in \Cohs_{X,\gamma}^{(SR)}(k)$. 
}

\begin{prop}\label{prop:ThetaReductive}
    The stack $\Cohs_{X,\gamma}^{(SR)}$ is $\Theta$-reductive.
\end{prop}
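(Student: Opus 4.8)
The plan is to verify the explicit criterion for $\Theta$-reductivity recorded just above: starting from a DVR $R$ over $k$, a family $E \in \Cohs_{X,\gamma}^{(SR)}(R)$, and a filtration $E_K^\bullet$ of the generic fibre with $\gr(E_K^\bullet) \in \Cohs_{X,\gamma}^{(SR)}(K)$, one must show that the induced filtration $E_k^\bullet$ of the special fibre satisfies $\gr(E_k^\bullet) \in \Cohs_{X,\gamma}^{(SR)}(k)$.

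First I would make the induced filtration explicit. The filtration $E_K^\bullet$ extends to a filtration $0 = \cE_0 \subset \cE_1 \subset \ldots \subset \cE_m = E$ by saturation, i.e. by setting $\cE_i := E \cap E_{K,i}$ inside $E_K$, so that each quotient $Q_{R,i} := E/\cE_i$ is $R$-flat with generic fibre $Q_{K,i} = E_K/E_{K,i}$. Because $\gr(E_K^\bullet)$ is torsion-free of slope $\mu$, each $Q_{K,i}$ is a torsion-free quotient of $E_K$ of slope $\mu$. Restricting to the special fibre (using $R$-flatness of $Q_{R,i}$) yields the induced filtration $E_k^\bullet$, with terms $E_{k,i} = (\cE_i)_k = \Ker(E_k \to Q_{k,i})$ and quotients $Q_{k,i} = (Q_{R,i})_k$.

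The crux of the argument is then Lemma \ref{lem:torsion-freeQuotient}, applied to each $R$-flat quotient $Q_{R,i}$ of $E$: since $E_k$ has property $(SR)$ and $Q_{K,i}$ is torsion-free of slope $\mu$, the lemma forces $Q_{k,i}$ to be torsion-free, hence reflexive. This is the one genuinely hard step, as torsion-freeness is not closed in flat families, and it is exactly here that the Langton-type degeneration argument is indispensable; everything that follows is formal homological algebra.

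With the reflexivity of all $Q_{k,i}$ in hand, I would conclude as follows. Each graded piece $G_{k,i} = E_{k,i}/E_{k,i-1}$ fits into a short exact sequence $0 \to G_{k,i} \to Q_{k,i-1} \to Q_{k,i} \to 0$; additivity of rank and degree gives $\mu(G_{k,i}) = \mu$, Remark \ref{rmk:reflexive}(2) gives that $G_{k,i}$ is reflexive, and the usual sub/quotient arguments for semistable sheaves (each slope-$\mu$ quotient $Q_{k,i}$ of $E_k$ is $\mu$-semistable, and $G_{k,i} \subset Q_{k,i-1}$ has slope $\mu$) show that $G_{k,i}$ is $\mu$-semistable. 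To see that $G_{k,i}$ satisfies $(SR)$ I would invoke Proposition \ref{prop:EquivProp}: a torsion-free quotient $G_{k,i} \twoheadrightarrow F$ of slope $\mu$ corresponds to $E_{k,i-1} \subset T \subset E_{k,i}$ with $F = E_{k,i}/T$, and then $E_k/T$ is a torsion-free slope-$\mu$ quotient of $E_k$, hence reflexive since $E_k$ has $(SR)$; as $F = \Ker(E_k/T \to Q_{k,i})$ with $Q_{k,i}$ torsion-free, Remark \ref{rmk:reflexive}(2) makes $F$ reflexive. Thus each $G_{k,i}$ has property $(SR)$, and Proposition \ref{prop:DirectSum} shows that $\gr(E_k^\bullet) = \bigoplus_i G_{k,i}$, as an iterated extension of slope-$\mu$ sheaves with $(SR)$, again has property $(SR)$. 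Being $\mu$-semistable of class $\gamma$, it lies in $\Cohs_{X,\gamma}^{(SR)}(k)$, which establishes $\Theta$-reductivity.
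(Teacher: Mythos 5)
Your proof is correct and takes essentially the same route as the paper's: extend the filtration of $E_K$ to an $R$-flat filtration of $E$ by saturation, apply Lemma \ref{lem:torsion-freeQuotient} to the resulting $R$-flat quotients, and conclude that the special-fibre graduation lies in $\Cohs_{X,\gamma}^{(SR)}(k)$ using the structural results of Section 2. Your write-up merely makes explicit two steps the paper's terse proof leaves implicit, namely that the Lemma (stated for quotients of $E$) must be applied to the quotients $Q_{R,i}=E/\cE_i$ rather than to the factors directly, and that the graded pieces satisfy $(SR)$ itself, not just reflexivity, via Proposition \ref{prop:EquivProp} before invoking closure under extensions (Proposition \ref{prop:DirectSum}, Proposition \ref{prop:abelianCategory}).
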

\begin{proof}
Let $R$ be a DVR over $k$ with quotient field $K$ and residue field $k$. Let $E \in \Cohs_{X,\gamma}^{(SR)}(R)$ be an $R$-flat family of sheaves {with slope $\mu := \mu(\gamma)$} and consider a filtration $E_K^\bullet$ of $E_K$ with $\gr(E_K^\bullet) \in \Cohs_{X,\gamma}^{(SR)}(K)$. This induces a unique filtration 
\[
E^\bullet:\quad  0 = E^0 \subset E^1 \subset \ldots \subset E^m = E
\]
with $R$-flat factors $E^j/E^{j-1}$ {(see for example \cite[Proposition 3.8]{AlperBBLT})}. By flatness the slope of each $(E^j/E^{j-1})_k$ is equal to $\mu$. Moreover we can deduce by Lemma \ref{lem:torsion-freeQuotient} that each $(E^j/E^{j-1})_k$ is reflexive. We conclude that $\gr(E_k^\bullet) \in \Cohs_{X,\gamma}^{(SR)}(k)$ (see Proposition \ref{prop:abelianCategory}).
\end{proof}

{
\subsection{S-completeness}
Let $R$ be a DVR over $k$ with quotient field $K$, residue field $k$, and uniformizer $\pi \in R$. Consider the quotient stack
\[
    \ST_R \coloneqq [\Spec(R[s,t]/(st-\pi))/\G_m],
\]
where $s, t$ have $\G_m$-weights $1, -1$ respectively. Then $\ST_R$ has a unique closed point $0$, where both $s$ and $t$ vanish. Note that $\ST_R|_{\{s = 0\}} \cong \Theta_k$ and same for $t = 0$. Therefore $0 = \ST_R|_{\{st = 0\}} \cong \Theta_k \cup_{\BG_{m}} \Theta_k$, where $\BG_m = [\textnormal{pt}/\G_m]$. On the other hand,
\[
    \ST_R \setminus \{ 0 \} \cong \Spec R \cup_{\Spec K} \Spec R.
\]

\begin{defn}[see Definition 3.38 in \cite{AlperHLH}]
A noetherian algebraic stack $\fM$ is \emph{$S$-complete} over $k$ if for every $R$ DVR over $k$, any commutative diagram
\[
\xymatrix{ \ST_R \setminus \{0\} \ar[r] \ar[d] &  \fM \ar[d] \\  \ST_R \ar[r] \ar@{-->}[ru] &  \Spec k }
\]
of solid arrows can be uniquelly filled in.  
\end{defn}

\begin{rmk}
By definition, a morphism $\ST_R \to \Cohs_X$ corresponds to a coherent sheaf $E$ on $\ST_R \times X$ flat over $\ST_R$. According to \cite[Corollary 7.14]{AlperHLH}, we also have the following explicit characterization: A quasi-coherent sheaf $E$ on $\ST_R \times X$ corresponds to a $\Z$-graded quasi-coherent sheaf $\oplus_{n \in \Z} E_n$ on $X_R$ together with a diagram
 \begin{equation}\label{eq:Sdiagram}
 \xymatrix{
\cdots\ar@/^1pc/[r]^{s}& E_{n-1}\ar@/^1pc/[r]^{s}\ar@/^1pc/[l]^{t}&E_n\ar@/^1pc/[r]^{s}\ar@/^1pc/[l]^{t} &E_{n+1}\ar@/^1pc/[r]^{s}\ar@/^1pc/[l]^{t} &\cdots \ar@/^1pc/[l]^{t}\\}
\end{equation}
such that $st=ts=\pi$, {which holds at each level}. Moreover,
\begin{enumerate}
    \item $E$ is coherent if and only if each $E_n$ is coherent, $s : E_{n-1} \to E_{n}$ is an isomorphism for $n \gg 0$ and $t : E_n \to E_{n-1}$ is an isomorphism for $n \ll 0$.
    \item $E$ is flat over $\ST_R$ if and only if $s$ and $t$ are injective {at each level}, and the induced map $s : E_{n-1}/tE_n \to E_n / tE_{n+1}$ is injective for all $n$. 
\end{enumerate}
\end{rmk}
}

In order to prove $S$-completeness for $\Cohs_{X,\gamma}^{(SR)}$, we will use elementary transformations as presented in \cite{langton1975valuative} of which we first recall the basic facts.

We denote by $\xi$ the generic point of $X_k$ and by $\Xi$ the generic point of $X_K$. We let $i: X_k \to X_R$ and $j: X_K \to X_R$ be the inclusion morphisms. Let $E_K$ be a torsion-free sheaf of rank $r$ on $X_K$, and let $E \subset j_* E_K$ be a torsion-free subsheaf on $X_R$ such that $j^* E = E_K$ and $i^* E$ is torsion-free on $X_k$. Then $E_\xi$ defines a free $\cO_{X_R,\xi}$-submodule of $(E_K)_{\Xi}$ of rank $r$. Conversely, by \cite[Proposition 6]{langton1975valuative} for any rank $r$ free $\cO_{X_R,\xi}$-submodule $M$ of $(E_K)_{\Xi}$ there is a unique torsion-free submodule $E \subset j_* E_K$ on $X_R$ such that $j^* E = E_K$, $E_\xi = M$ and $i^* E$ is torsion-free on $X_k$. We recall the equivalence relation introduced by Langton on such submodules $M$, which declares $M$ and $\pi^n M$ to be equivalent. Note that two equivalent modules give rise to isomorphic $\cO_{X_R}$-modules as above.

Recall also that Langton defines two equivalence classes $[M]$, $[M']$ to be \textit{adjacent} if there exist decompositions $M = N \oplus P$ and $M' = N + \pi M$. In this case, as $\cO_{X_R,\xi}$ is a principal ideal domain, we can find a basis $(e_1,\ldots,e_r)$ of $M$ over $\cO_{X_R,\xi}$ such that $(e_1,\ldots,e_s,\pi e_{s+1},\ldots,\pi e_r)$ is a basis of $M'$ for some $s$. The inclusion $M' \subset M$ induces an inclusion of associated $\cO_{X_R}$-modules $E' \subset E$. Let $\alpha : E'_k \to E_k$ denote its restriction to $X_k$. One gets two exact sequences
\begin{align}\label{eq:sesEdges}
    &0 \to \Ker(\alpha) \to E'_k \to \im(\alpha) \to 0,\\
    &0 \to \im(\alpha) \to E_k \to \Coker(\alpha) \to 0, \label{eq:sesEdgesII}
\end{align}
and it is shown in \cite[Proposition 7]{langton1975valuative} that $\Coker(\alpha)$ is torsion-free and that there is a natural isomorphism $\Coker(\alpha) \cong \Ker(\alpha)$. One says in this case that $E'$ is an \textit{elementary transformation} of $E$. In Langton's terminology and notation the passage from $[M]$ to $[M']$ is called an \textit{edge} and is denoted by $[M] - [M']$.

The next result is analogous to \cite[Proposition 4.4]{greb2020moduliKahler}. 

\begin{lem}\label{lem:adjacentModules}
Let $E_K$ be a torsion-free sheaf of rank $r$ on $X_K$, and let $E,E' \subset j_* E_K$ be torsion-free subsheaves on $X_R$ with $j^* E = j^* E' = E_K$ such that $i^* E$ and $i^* E'$ are torsion-free on $X_k$. Then we have:
\begin{enumerate}
    \item The sheaves $E$ and $E'$ are connected by  a finite chain of edges 
    \[
    [M_0] - [M_1], [M_1] - [M_2], \ldots, [M_{q-1}] - [M_{q}].
\]
\item If in addition $E_k$ and $E'_k$ satisfy property $(SR)$ and if for $0 \le n \le q$ the $E_n$ are the corresponding $\cO_{X_R}$-modules associated to $M_n$ by Langton's construction, then the $\cO_{X_k}$-modules $(E_n)_k$ satisfy $(SR)$.
\end{enumerate}

\end{lem}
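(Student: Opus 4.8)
The plan is to handle the two parts separately, with part (2) carrying the real weight; part (1) is a lattice computation and part (2) an induction along the chain resting on two closure properties of $(SR)$.

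For part (1) I would work at the generic point $\xi$ of $X_k$. Since $X_R$ is regular and $\xi$ has codimension one, the local ring $\cO_{X_R,\xi}$ is a discrete valuation ring with uniformizer $\pi$; write $\kappa$ for its fraction field. The modules $M = E_\xi$ and $M' = E'_\xi$ are then full $\cO_{X_R,\xi}$-lattices of rank $r$ inside the same $\kappa$-vector space $(E_K)_\Xi$, because $j^*E = j^*E' = E_K$ forces $M \otimes_{\cO_{X_R,\xi}} \kappa = (E_K)_\Xi = M' \otimes_{\cO_{X_R,\xi}} \kappa$. By the elementary divisor theorem I may choose a basis $e_1,\dots,e_r$ of $M$ and integers $a_1 \le \dots \le a_r$ such that $\pi^{a_1}e_1,\dots,\pi^{a_r}e_r$ is a basis of $M'$; after replacing $M'$ by the equivalent lattice $\pi^N M'$ I may assume $0 = a_1 \le \dots \le a_r = q$. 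Setting $M_j := \langle \pi^{\min(a_i,j)} e_i : 1 \le i \le r \rangle$ for $0 \le j \le q$ gives $M_0 = M$ and $M_q = M'$, and passing from $M_j$ to $M_{j+1}$ leaves the basis vectors with $a_i \le j$ unchanged and multiplies those with $a_i > j$ by $\pi$. This is exactly Langton's adjacency relation between $[M_j]$ and $[M_{j+1}]$, so these lattices form the desired chain of edges.

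For part (2) I would use the chain just constructed, so that the representatives are nested, $E = E_0 \supset E_1 \supset \dots \supset E_q = E'$, and each edge comes with a map $\alpha_n \colon (E_{n+1})_k \to (E_n)_k$ together with the exact sequences \eqref{eq:sesEdges} and \eqref{eq:sesEdgesII}, in which $C_n := \Coker(\alpha_n)$ is torsion-free and $\Ker(\alpha_n) \cong C_n$. The strategy is a one-step induction: assuming $(E_n)_k$ has property $(SR)$, I want to deduce the same for $(E_{n+1})_k$. Writing $I_n := \im(\alpha_n)$, sequence \eqref{eq:sesEdgesII} exhibits $I_n$ as a subsheaf of $(E_n)_k$ with torsion-free quotient $C_n$, hence a saturated subsheaf; semistability of $(E_n)_k$, together with the fact that its slope is the weighted average of $\mu(I_n)$ and $\mu(C_n)$, forces $\mu(I_n) = \mu(C_n) = \mu$. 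If I can show that both $I_n$ and $C_n$ (hence $\Ker(\alpha_n) \cong C_n$) again satisfy $(SR)$, then sequence \eqref{eq:sesEdges} presents $(E_{n+1})_k$ as an extension of two $(SR)$ sheaves of the same slope, and Proposition \ref{prop:DirectSum} closes the step. The base case is the hypothesis that $E_k$ satisfies $(SR)$ (and, by the symmetry of the argument, $E'_k$ serves equally well as a starting point).

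The two closure statements are the crux. That a torsion-free slope-$\mu$ quotient $C_n$ of an $(SR)$ sheaf is again $(SR)$ is immediate from Proposition \ref{prop:EquivProp}: any torsion-free slope-$\mu$ quotient of $C_n$ is also such a quotient of $(E_n)_k$, hence reflexive. The genuinely delicate point — the step I expect to be the main obstacle — is that a \emph{saturated} slope-$\mu$ subsheaf $I_n$ of an $(SR)$ sheaf is again $(SR)$, since this is covered neither by closure under quotients nor by closure under extensions. I would handle it by the splicing device already used in the proof of Proposition \ref{prop:EquivProp}: concatenating a Seshadri filtration of $I_n$ with the preimage of a Seshadri filtration of the torsion-free quotient $(E_n)_k/I_n$ produces a Seshadri filtration of $(E_n)_k$ in which $I_n$ occurs as a step. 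Since $(E_n)_k$ is $(SR)$, Proposition \ref{prop:equivDef} shows this graduation is reflexive, and as a direct summand of a reflexive sheaf is reflexive, every graded factor — in particular those coming from $I_n$ — is reflexive. Hence the Seshadri graduation of $I_n$ is reflexive, i.e. $I_n$ satisfies $(SR)$, which completes the induction and the proof.
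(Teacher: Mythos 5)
Part (1) of your proposal is correct and is essentially the paper's argument: elementary divisors over the discrete valuation ring $\cO_{X_R,\xi}$, followed by an interpolating chain of adjacent lattices (your interpolation $\pi^{\min(a_i,j)}e_i$ differs from the paper's $\pi^{\max(m_i-n,0)}e_i$ only in which intermediate lattices appear; both are valid chains of edges).

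Part (2) has a genuine gap, located exactly at the step you flag as the crux but resolve incorrectly: the claim that ``semistability of $(E_n)_k$, together with the fact that its slope is the weighted average of $\mu(I_n)$ and $\mu(C_n)$, forces $\mu(I_n)=\mu(C_n)=\mu$.'' Semistability of $(E_n)_k$ gives only $\mu(I_n)\le\mu\le\mu(C_n)$, and the weighted-average identity is automatic from additivity of degree and rank; the two together are perfectly consistent with strict inequality on both sides, so nothing forces equality. Moreover, the failure cannot be repaired inside your inductive scheme, because your induction step uses only that $(E_n)_k$ is $(SR)$ and that $E_n$, $E_{n+1}$ are joined by an edge, and that hypothesis set is genuinely insufficient. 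Concretely: let $E$ be the pullback to $X_R$ of a stable locally free sheaf $E_k$, let $Q$ be a torsion-free rank-one quotient of $E_k$ with $\mu(Q)>\mu$ (these always exist), and set $E'=\Ker(E\to i_*Q)$. This is a single edge, yet restricting $0\to E'\to E\to i_*Q\to 0$ to $X_k$ shows $\Ker(\alpha)\cong Q$ embeds in $E'_k$, so $E'_k$ is not even semistable. Thus an edge out of an $(SR)$ fiber can destroy semistability; the lemma is true only because the chain is pinned at \emph{both} ends by $(SR)$ sheaves, and your inductive step never uses the second endpoint (you invoke $E'_k$ only as an alternative base case ``by symmetry'').

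The paper's proof uses the far endpoint in an essential way. The inclusions $M_0\subset M_1\subset\ldots\subset M_q$ induce a composite morphism $E'_k\to E_k$ between two objects of the category of Proposition \ref{prop:abelianCategory} (torsion-free $(SR)$ sheaves of slope $\mu$, together with $0$), which is abelian; hence the kernel of the composite lies in that category. A computation at the generic point $\xi$, combined with torsion-freeness of $(E_0)_k$, identifies $\Ker(\alpha_0)$ with the kernel of the composite. This is precisely what pins the slope of $\Ker(\alpha_0)$, hence of $\im(\alpha_0)$ and of $\Coker(\alpha_0)\cong\Ker(\alpha_0)$, at $\mu$ --- the fact you could not derive --- after which the two exact sequences \eqref{eq:sesEdges}, \eqref{eq:sesEdgesII} and closure under extensions give that $(E_1)_k$ is $(SR)$, and one concludes by induction on the length $q$ of the chain. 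Your splicing argument showing that a \emph{saturated slope-$\mu$} subsheaf of an $(SR)$ sheaf is again $(SR)$ is fine in itself, but it only becomes applicable once the slope equality has been established by such a global argument along the whole chain.
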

\begin{proof}
Set $M = E_\xi$, $M' = E'_\xi$. As $\cO_{X_R,\xi}$ is a principal ideal domain, we can find a basis $(e_1,\ldots,e_r)$ of $M$ over $\cO_{X_R,\xi}$ such that $(\pi^{m_1}e_1,\ldots,\pi^{m_r}e_r)$ is a basis of $M'$ for suitable integers $m_1 \le m_2 \le \ldots \le m_r$. Replacing $E_2$ by some $\pi^m E_2$, we may further assume that $m_1 = 0$. 

For integers $0 \le n_1 \le n_2 \le \ldots \le n_r$, denote by $M[n_1,\ldots,n_r] \coloneqq (\pi^{n_1}e_1,\ldots,\pi^{n_r}e_r)$ the $\cO_{X_R,\xi}$-submodule of $(E_K)_\Xi$ generated by $\pi^{n_1}e_1,\ldots,\pi^{n_r}e_r$. It is immediate to see that any such $M[n_1,\ldots,n_r]$ is adjacent to 
\[
    M[n_1,\ldots,n_r]_{adj} \coloneqq M[n_1 - \min(1,n_1),\ldots,n_r - \min(1,n_r)].
\]
Applying this procedure starting with $M_0 \coloneqq M[m_1,\ldots,m_r]$, we obtain a finite sequence of adjacent modules connecting $M'$ and $M$ given by
\[
    M_0, M_1 \coloneqq (M_0)_{adj}, M_2 \coloneqq (M_1)_{adj},\ldots, M_q \coloneqq (M_{q-1})_{adj}
\]
such that $M_q = M[0,\ldots,0]$. Thus $[M']$ and $[M]$ are connected via the finite sequence of edges
\[
    [M_0] - [M_1], [M_1] - [M_2], \ldots, [M_{q-1}] - [M_{q}].
\]

For the second statement, let us denote by $\alpha_n : (E_n)_k \to (E_{n+1})_k$ the morphism induced by the inclusion $M_n \subset M_{n+1}$. Clearly the restriction of $\alpha_n$ to $\cO_{X_k,\xi}$ is given by tensoring $M_n \to M_{n+1}$ by $\cO_{X_R,\xi}/\pi \cO_{X_R,\xi}$. By our construction, we see that $\Ker(\alpha_0) = \Ker(\alpha_q \circ \ldots \circ \alpha_0)$ over the generic point $\xi$. As $(E_0)_k$ is torsion-free, it follows that $\Ker(\alpha_0) = \Ker(\alpha_q \circ \ldots \circ \alpha_0)$ over $X_k$. Now $(E_0)_k = E'_k$ and $(E_q)_k = E_k$ satisfy $(SR)$, hence $\Ker(\alpha_q \circ \ldots \circ \alpha_0)$ also satisfies $(SR)$ by Proposition \ref{prop:abelianCategory}. Thus $\Ker(\alpha_0)$ satisfies $(SR)$. Using the short exact sequences \eqref{eq:sesEdges}, \eqref{eq:sesEdgesII} for $\alpha_0$, the isomorphism $\Coker(\alpha_0) \cong \Ker(\alpha_0)$ and Proposition \ref{prop:abelianCategory} we obtain that $(E_1)_k$ satisfies $(SR)$. We conclude by induction on the length $q$ of our chain of edges.
\end{proof}

\begin{prop}\label{prop:Scomplete}
    The stack $\Cohs_{X,\gamma}^{(SR)}$ is $S$-complete.
\end{prop}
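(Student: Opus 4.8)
The plan is to verify the valuative criterion defining $S$-completeness by translating it, through the explicit description recalled in the Remark above together with Langton's lattice correspondence, into a statement about a chain of elementary transformations, and then to invoke Lemma \ref{lem:adjacentModules}. First I would unwind the data. Since $\ST_R \setminus \{0\} \cong \Spec R \cup_{\Spec K} \Spec R$, a morphism $\ST_R \setminus \{0\} \to \Cohs_{X,\gamma}^{(SR)}$ amounts to two $R$-flat families $E, E'$ of sheaves on $X_R$ whose restrictions to $X_K$ are identified, say $E_K \cong E'_K$, and whose fibres $E_k, E'_k$ (and $E_K$) satisfy $(SR)$; in particular all of them are torsion-free, so $i^*E$ and $i^*E'$ are torsion-free on $X_k$. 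Hence $E$ and $E'$ fall under the setup recalled before Lemma \ref{lem:adjacentModules}: they are the unique torsion-free extensions associated by \cite[Proposition 6]{langton1975valuative} to the $\cO_{X_R,\xi}$-lattices $M := E_\xi$ and $M' := E'_\xi$ inside $V := (E_K)_\Xi$.

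Next I would construct the filling. For $n \in \Z$ set $M_n := M \cap \pi^{-n} M'$; this is an increasing chain of lattices with $\pi M_n \subseteq M_{n-1} \subseteq M_n$, equal to $M$ for $n \gg 0$ and to $\pi^{-n} M'$ for $n \ll 0$. Let $\cE_n \subset j_* E_K$ be the associated $\cO_{X_R}$-modules, together with $s \colon \cE_{n-1} \to \cE_n$ induced by the inclusions $M_{n-1} \subseteq M_n$ and $t \colon \cE_n \to \cE_{n-1}$ induced by multiplication by $\pi$, so that $st = ts = \pi$. I would then check, by a direct computation with the $M_n$ in Smith normal form, that $\bigoplus_n \cE_n$ satisfies the coherence and flatness conditions of the Remark, hence defines a morphism $\ST_R \to \Cohs_X$; the stabilization of $M_n$ at the two ends reproduces exactly $E$ and $E'$ on the two charts of $\ST_R \setminus \{0\}$. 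The crucial observation is that, after replacing $M'$ by a suitable $\pi^N M'$, the finite transition part $M_0, M_1, \dots, M_q$ of this chain is precisely the chain of edges produced by Lemma \ref{lem:adjacentModules} applied to $E'$ and $E$.

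Finally, I would identify the central fibre and conclude. The object over the closed point $0 \cong \Theta_k \cup_{\BG_m} \Theta_k$ is the associated graded $\bigoplus_n \Coker(\alpha_{n-1})$, where $\alpha_{n} \colon (\cE_{n})_k \to (\cE_{n+1})_k$ is the reduction of $s$; only the finitely many indices $1 \le n \le q$ contribute. By Lemma \ref{lem:adjacentModules}(2) every $(\cE_n)_k$ satisfies $(SR)$, so each $\alpha_{n-1}$ is a morphism in the abelian category of Proposition \ref{prop:abelianCategory}; hence every $\Coker(\alpha_{n-1})$ satisfies $(SR)$ and has slope $\mu$, and their finite direct sum satisfies $(SR)$ and has class $\gamma$. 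Since $\Cohs_{X,\gamma}^{(SR)}$ is open in $\Cohs_X$ by Proposition \ref{prop:openness}, the morphism $\ST_R \to \Cohs_X$ factors through $\Cohs_{X,\gamma}^{(SR)}$. For uniqueness I would argue that the flatness condition together with the prescribed behaviour $M_n = M$ ($n \gg 0$), $M_n = \pi^{-n}M'$ ($n \ll 0$) forces $M_n = M \cap \pi^{-n}M'$, after which $\cE_n$ is determined by Langton's correspondence.

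The main obstacle is this third step: ensuring that property $(SR)$ is not lost in the limit, i.e. that the associated graded on the special fibre is again reflexively Seshadri graded. This is exactly the content that Lemma \ref{lem:adjacentModules} was designed to supply, the point being that $(SR)$ is preserved under the formation of kernels and cokernels of the elementary transformations linking $E'$ and $E$, via the abelian structure of Proposition \ref{prop:abelianCategory} and Langton's isomorphism $\Coker(\alpha) \cong \Ker(\alpha)$. The remaining bookkeeping — matching the $\ST_R$-formalism of the Remark with Langton's lattices and checking the coherence and flatness conditions — is routine but must be carried out with care.
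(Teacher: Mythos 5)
Your strategy is the same as the paper's: convert the $\ST_R$-filling problem into Langton's lattice language, interpolate between $M'$ and $M$ by the chain $M_n=M\cap\pi^{-n}M'$ (which, as you correctly observe, is exactly the chain of edges of Lemma \ref{lem:adjacentModules}), and then use Lemma \ref{lem:adjacentModules} and Proposition \ref{prop:abelianCategory} to keep property $(SR)$ in the limit. However, your identification of the central fibre --- the step everything hinges on --- is wrong. The fibre of the graded family $\bigoplus_n\cE_n$ at the closed point $0=\{s=t=0\}$ has graded pieces $\cE_n/(s\cE_{n-1}+t\cE_{n+1})$; equivalently, it is $\gr(G^\bullet)$ for the filtration $G^\bullet$ whose members are the quotients $\cE_{n-1}/t\cE_n$, as in \cite[Corollary 7.14]{AlperHLH}. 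It is \emph{not} $\bigoplus_n\Coker(\alpha_{n-1})$ with $\alpha_{n-1}\colon(\cE_{n-1})_k\to(\cE_n)_k$ the reduction of $s$: your pieces are $\cE_n/(s\cE_{n-1}+\pi\cE_n)$, which merely surject onto the true ones because $\pi\cE_n=ts\cE_n\subseteq t\cE_{n+1}$. Concretely, take $r=2$, $M=(e_1,e_2)$, $M'=(e_1,\pi e_2)$, so $q=1$; writing $E'$ and $E$ for the Langton sheaves associated to $M'$ and $M$, so that $\pi E\subset E'\subset E$, the true central fibre is $E'/\pi E\oplus E/(E'+\pi E)$, of class $\gamma$, whereas your recipe yields only $E/(E'+\pi E)$, of strictly smaller rank. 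Moreover, for $n\le 0$ one has $\cE_{n-1}=\pi\cE_n$ (equivalently $s=\pi$ in the stabilized normalization), so $\alpha_{n-1}=0$ and $\Coker(\alpha_{n-1})\cong E'_k\neq 0$ for infinitely many $n$, contradicting your claim that only the indices $1\le n\le q$ contribute. So, as written, the object you prove to lie in $\Cohs^{(SR)}_{X,\gamma}(k)$ is not the value $f(0)$ of your filling, and the deduction that $f$ factors through $\Cohs^{(SR)}_{X,\gamma}$ does not go through.

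The repair uses exactly the ingredients you list, applied to the right objects, and is what the paper does. First, each member $\cE_{n-1}/t\cE_n$ of $G^\bullet$ is torsion-free on $X_k$: it is the cokernel of the elementary transformation $(\pi\cE_n)_k\to(\cE_{n-1})_k$, so \cite[Proposition 7]{langton1975valuative} applies. (This same torsion-freeness is also what your flatness check requires: a Smith-normal-form computation only gives injectivity of $s\colon\cE_{n-1}/t\cE_n\to\cE_n/t\cE_{n+1}$ at the generic point $\xi$, and one needs torsion-freeness of the source to promote this to injectivity on all of $X_k$; so that step is not as routine as you suggest.) Second, each member satisfies $(SR)$, by Lemma \ref{lem:adjacentModules}(2) combined with the sequences \eqref{eq:sesEdges}, \eqref{eq:sesEdgesII}, the isomorphism $\Coker(\alpha)\cong\Ker(\alpha)$ and Proposition \ref{prop:abelianCategory}. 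Finally, the graded pieces of $G^\bullet$, being cokernels of morphisms between objects of the abelian category of Proposition \ref{prop:abelianCategory} (all of slope $\mu$), satisfy $(SR)$, hence so does $\gr(G^\bullet)=f(0)$.
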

\begin{proof}
Consider a morphism $\ST_R \setminus \{0\} \to \Cohs_{X,\gamma}^{(SR)}$, where $R$ is a discrete valuation ring as in the above setup. This corresponds to two families $E, E' \in \Cohs_{X,\gamma}^{(SR)}(R)$ such that $j_* E_K = j_* E'_K$ on $X_K$. 

In what follows we use the same notation as in the proof of Lemma \ref{lem:adjacentModules}. That is, after replacing $E_2$ by some $\pi^m E_2$, we assume that $M \coloneqq E_\xi$ has a basis $(e_1,\ldots,e_r)$ over $\cO_{X_R,\xi}$ and $M' \coloneqq E'_\xi$ has a basis $(\pi^{m_1}e_1,\ldots,\pi^{m_r}e_r)$ with $0 = m_1 \le m_2 \le \ldots \le m_r$. According to Lemma \ref{lem:adjacentModules}, $M'$ and $M$ are connected via a finite sequence of edges 
\[
    [M_0] - [M_1], [M_1] - [M_2], \ldots, [M_{q-1}] - [M_{q}],
\]
with $M_0 = M[m_1,\ldots,m_r]$ and $M_q = M[0,\ldots,0]$.
For $0 \le n \le q$, denote by $E_n \subset j_* E_K$ the unique torsion-free subsheaf on $X_R$ such that $j^* E = E_K$, $(E_n)_\xi = M_n$ and $i^* E_n$ is torsion-free on $X_k$. We set $E_n = E'$ for $n < 0$ and $E_n = E$ for $n > q$.

Now consider the $\Z$-graded coherent sheaf $\oplus_{n \in \Z} E_n$ on $X_R$ together with maps $t_n : E_n \to E_{n-1}$ and $s_n: E_n \to E_{n+1}$ given by
\begin{equation}\label{eq:Sdiagram2}
      \xymatrix{
\cdots\ar@/^1pc/[r]^{\pi}& E_{-2}\ar@/^1pc/[r]^{\pi}\ar@/^1pc/[l]^{1} & E_{-1}\ar@/^1pc/[r]^{\pi}\ar@/^1pc/[l]^{1}&E_0\ar@/^1pc/[r]^{1}\ar@/^1pc/[l]^{1} &E_1\ar@/^1pc/[r]^{1}\ar@/^1pc/[l]^{\pi} &E_2\ar@/^1pc/[r]^{1}\ar@/^1pc/[l]^{\pi} &\cdots \ar@/^1pc/[l]^{\pi}\\}
\end{equation}
Let us show that the above defines an $\ST_R$-flat coherent sheaf $\cE$ on $\ST_R \times X$ corresponding to the extension of $\ST_R \setminus \{0\} \to \Cohs_{X,\gamma}^{(SR)}$ to $\ST_R \to \Cohs_{X,\gamma}^{(SR)}$. By construction, each $E_n$ is coherent, $s_n$ is an isomorphism for $n \gg 0$ and $t_n$ is an isomorphism for $n \ll 0$. We also see that $s_n$ and $t_n$ are injective for all $n$. To show that $\cE$ is $\ST_R$-flat, let us check that $s :  E_{n-1}/t_n E_n \to E_n / t_{n+1}E_{n+1}$ is injective for all $n$. This is clear by construction for $n \le 0$ and $n > q$. Take now $0 < n \le q$. Note that all $E_{n-1}/\pi E_n$ can be also viewed as $\cO_{X_k}$-modules. Then $s$ induces a morphism $M_{n-1}/\pi M_n \to M_n/\pi M_{n+1}$ of $\cO_{X_R,\xi}$-modules, which is in fact the localization of $s$ at the generic point $\xi$ of $X_k$. It is easy to check using the construction that the morphism $M_{n-1}/\pi M_n \to M_n/\pi M_{n+1}$ is injective, so in order to have injectivity of $s$ it remains to check that $E_{n-1}/\pi E_n$ is torsion-free on $X_k$. But tensoring the sequence
\[
    \pi E_{n} \to E_{n-1} \to E_{n-1}/\pi E_n \to 0
\]
by $\cO_{X_k}$ we see that $E_{n-1}/\pi E_n $ is the cokernel of $\alpha : (\pi  E_{n})_k \to (E_{n-1})_k$, which we know to be torsion-free since $\pi E_n$ and $E_{n-1}$ are related by an elementary transformation. Furthermore, by Lemma \ref{lem:adjacentModules} the sheaves $E_{n-1}/\pi E_n $ have property $(SR)$.

We thus obtain a diagram
\[
\xymatrix{
    \ST_R \setminus \{ 0\} \ar[r] \ar[d] &  \Cohs_{X,\gamma}^{(SR)} \ar[d]^\iota \\
     \ST_R \ar[r]^f &  \cC oh_X}
\]
where $\iota$ is the natural inclusion and $f$ corresponds to $\cE$.  According to \cite[Corollary 7.14]{alper2013good}, the restriction $f|_{ \{s = 0 \}}$, resp. $f|_{ \{t = 0 \}}$, corresponds to a filtration 
\begin{align*}
    F_\bullet:&\quad \cdots \xleftarrow{t} E_n/sE_{n-1} \xleftarrow{t} E_{n+1}/sE_n \xleftarrow{t} \cdots,\\
    \text{resp. } G_\bullet:&\quad \cdots \xrightarrow{s} E_{n-1}/tE_n \xrightarrow{s}  E_n/tE_{n+1} \xrightarrow{s} \cdots,
\end{align*}
such that $f(0) = \gr(F_\bullet) \cong \gr(G_\bullet) \in \cC oh_X(k)$. To show that $f$ factorizes through $\Cohs_{X,\gamma}^{(SR)}$, it remains to check that $\gr(G_\bullet) \in \Cohs_{X,\gamma}^{(SR)}(k)$. Since each member of the filtration has property $(SR)$ and since by Proposition \ref{prop:abelianCategory} we are dealing with an abelian category which is closed under extensions, we conclude that $\gr(G_\bullet)$ satisfies $(SR)$.
\end{proof}

\begin{thm}\label{thm:MainThm}
The stack $\Cohs_{X,\gamma}^{(SR)}$ admits a separated good moduli space $M^{(SR)}_{X,\gamma}$ of finite type over $k$. Furthermore, the geometric points of $M^{(SR)}_{X,\gamma}$ are the $S$-equivalence classes of coherent sheaves on $X$ with property $(SR)$ and class $\gamma$.
\end{thm}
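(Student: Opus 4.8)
The plan is to obtain the good moduli space from the criterion of Theorem \ref{thm:AHLH} and then to read off its geometric points from the abelian-category structure established in Proposition \ref{prop:abelianCategory}. First I would collect the hypotheses of Theorem \ref{thm:AHLH}. As recorded in the running discussion, $\Cohs_{X,\gamma}^{(SR)}$ is an open substack of $\Cohs_X$ by Proposition \ref{prop:openness}, so it inherits from $\Cohs_X$ the properties of being algebraic, locally of finite presentation, and of having affine diagonal over $k$; in particular its automorphism groups are affine and its diagonal is separated. Boundedness of the family of $\mu$-semistable sheaves of class $\gamma$ gives quasi-compactness, whence finite presentation over $k$, and $k$ has characteristic zero. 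The two remaining conditions, $\Theta$-reductivity and $S$-completeness, are precisely Propositions \ref{prop:ThetaReductive} and \ref{prop:Scomplete}. Theorem \ref{thm:AHLH} then produces a separated good moduli space $\pi\colon \Cohs_{X,\gamma}^{(SR)} \to M^{(SR)}_{X,\gamma}$, and since the source is of finite type over $k$, the general properties of good moduli spaces \cite{alper2013good} guarantee that $M^{(SR)}_{X,\gamma}$ is of finite type over $k$.

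For the description of the geometric points I would invoke three standard facts about a good moduli space $\pi$ \cite{alper2013good}: it is surjective on geometric points, each of its fibres contains a unique closed point of the stack, and two geometric points of the stack have the same image under $\pi$ exactly when their closures meet (after passing to a common algebraically closed field). Together these identify the geometric points of $M^{(SR)}_{X,\gamma}$ with the closed points of $\Cohs_{X,\gamma}^{(SR)}$, so the task reduces to matching closed points with $S$-equivalence classes.

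To carry this out, recall that by Proposition \ref{prop:abelianCategory} the torsion-free sheaves with property $(SR)$ and slope $\mu$ (together with $0$) form an abelian, noetherian and artinian category, so every such $E$ has a Seshadri filtration $E^\bullet$ with a well-defined associated graded $\gr(E^\bullet)$; the latter is reflexive and $\mu$-polystable, hence again satisfies $(SR)$ by the corollary following Proposition \ref{prop:abelianCategory}. The Rees degeneration attached to $E^\bullet$ is a morphism $\Theta \to \Cohs_{X,\gamma}^{(SR)}$ sending the open point to $[E]$ and the closed point to $[\gr(E^\bullet)]$, which places $[\gr(E^\bullet)]$ in the closure of $[E]$. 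Since a polystable (semisimple) object has the property that the associated graded of any of its filtrations is isomorphic to itself, it admits no strictly smaller degeneration inside this category, so $[\gr(E^\bullet)]$ is a closed point; being in the closure of $[E]$, it is then the unique closed point of the fibre over $\pi([E])$. Hence $\pi([E_1]) = \pi([E_2])$ if and only if $\gr(E_1^\bullet) \cong \gr(E_2^\bullet)$, that is, exactly when $E_1$ and $E_2$ are $S$-equivalent, yielding the asserted bijection.

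The main obstacle I anticipate lies in the step identifying closed points of the stack with $\mu$-polystable sheaves. While the Rees construction immediately exhibits any non-polystable sheaf as non-closed, establishing that a polystable sheaf gives a genuinely closed point requires knowing that every specialization in this stack is induced by a filtration; this is where the artinian structure of Proposition \ref{prop:abelianCategory} must be combined carefully with the good-moduli-space machinery (or with the $\Theta$-reductivity already proven) so that no degenerations are overlooked.
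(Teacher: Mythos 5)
Your overall route coincides with the paper's. The first statement is obtained exactly as in the paper: one verifies the hypotheses of Theorem \ref{thm:AHLH} (openness via Proposition \ref{prop:openness}, finite presentation, affine automorphism groups and separated diagonal inherited from $\Cohs_X$, quasi-compactness from boundedness of the semistable family) and then invokes Propositions \ref{prop:ThetaReductive} and \ref{prop:Scomplete}. The second statement is reduced, again as in the paper, to the criterion that two $k$-points of the stack are identified in the good moduli space if and only if their closures meet, cf. \cite[Theorem 4.16 (iv)]{alper2013good}, combined with the identification of the unique closed point of $\overline{\{E\}}$ with the polystable graded sheaf $\gr_{JH}(E)$.

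The genuine gap is the step you flagged yourself: the closedness of polystable points. Your argument --- a semisimple object is isomorphic to the associated graded of any of its filtrations, hence ``admits no strictly smaller degeneration inside this category'' --- is not valid as stated, because a specialization $[F]\in\overline{\{[G]\}}$ in the stack is \emph{not} a priori induced by a filtration of $G$; it is only given by a family over a DVR whose generic fibre is a form of $G$ and whose special fibre is $F$. (That specializations to closed points are realized by $\Theta$-degenerations is itself a nontrivial theorem of Kempf type, not something one may assume.) The paper closes exactly this point by citation: the unique closed point of $\overline{\{E\}}$ is $\gr_{JH}(E)$, see \cite[Lemma 3.11]{AlperBBLT} for a proof in the curve case. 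If you want a self-contained argument, run it over a DVR $R$ with residue field $k$: let $G=\bigoplus_i G_i^{\oplus n_i}$ be polystable with property $(SR)$ and let $E$ be an $R$-flat family in $\Cohs_{X,\gamma}^{(SR)}(R)$ whose generic fibre is a form of $G$ and whose special fibre is $F$. Upper semicontinuity of $\dim\Hom$, together with $\Hom(G_i,G_j)=\delta_{ij}\,k$ for non-isomorphic stable sheaves of the same slope, gives $\dim\Hom(G_i,F)\ge n_i$. Each nonzero map $G_i\to F$ is injective since $G_i$ is a simple object of the abelian category of Proposition \ref{prop:abelianCategory}, so the sum of the images of all such maps is a semisimple subobject $S\cong\bigoplus_i G_i^{\oplus m_i}$ of $F$ with $m_i\ge n_i$; comparing ranks forces $m_i=n_i$ and forces $F/S$ to have rank zero, hence to vanish in that abelian category, so $F\cong G$. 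Thus every DVR-specialization of $[G]$ inside $\Cohs_{X,\gamma}^{(SR)}$ is $[G]$ itself, i.e. $[G]$ is closed, which is precisely what your closure argument needs.
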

\begin{proof}
The first statement follows by putting together Theorem \ref{thm:AHLH} with Proposition \ref{prop:ThetaReductive} and Proposition \ref{prop:Scomplete}. For the second part we note that two points $E$ and $E'$ in $\Cohs_{X,\gamma}^{(SR)}(k)$ are identified in $M^{(SR)}_{X,\gamma}$ if and only if $\overline{\{ E\}} \cap \overline{\{ E'\}} \ne \emptyset$ in $|\Cohs_{X,\gamma}^{(SR)}(k)|$, cf. \cite[Theorem 4.16 (iv)]{alper2013good}. Moreover, for any $E \in \Cohs_{X,\gamma}^{(SR)}(k)$, there is a unique closed point in $\overline{\{ E\}}$ which is given by the polystable graded module $\gr_{JH}(E)$, see \cite[Lemma 3.11]{AlperBBLT} for a proof in the curve case.
\end{proof}

\begin{cor}\label{cor:MainThm}
The stack $\Cohs_{X,\gamma}^{(SLF)}$ admits a separated good moduli space $M^{(SLF)}_{X,\gamma}$ which is an open subspace of $M^{(SR)}_{X,\gamma}$. Furthermore, the geometric points of $M^{(SLF)}_{X,\gamma}$ are the $S$-equivalence classes of coherent sheaves on $X$ with property $(SLF)$ and class $\gamma$.
\end{cor}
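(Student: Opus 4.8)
The plan is to realize $M^{(SLF)}_{X,\gamma}$ as an open subspace of the good moduli space $M^{(SR)}_{X,\gamma}$ produced in Theorem \ref{thm:MainThm}, by showing that $\Cohs_{X,\gamma}^{(SLF)}$ is a \emph{saturated} open substack of $\Cohs_{X,\gamma}^{(SR)}$. First I would record that $\Cohs_{X,\gamma}^{(SLF)} \subseteq \Cohs_{X,\gamma}^{(SR)}$ is open: property $(SLF)$ implies $(SR)$ (a locally free sheaf is reflexive), and $(SLF)$ is open in flat families by Proposition \ref{prop:openness}. Writing $\pi \colon \Cohs_{X,\gamma}^{(SR)} \to M^{(SR)}_{X,\gamma}$ for the good moduli space morphism, the key reduction is to the general principle (good moduli spaces are local on the target, and $\pi$ is universally closed and surjective, cf.\ \cite[Theorem 4.16]{alper2013good}) that a saturated open substack $\mathfrak{U} \subseteq \Cohs_{X,\gamma}^{(SR)}$ (i.e.\ one with $\pi^{-1}(\pi(\mathfrak{U})) = \mathfrak{U}$) has image $\pi(\mathfrak{U})$ open in $M^{(SR)}_{X,\gamma}$, and that $\pi$ restricts to a good moduli space $\mathfrak{U} \to \pi(\mathfrak{U})$. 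Thus everything comes down to verifying saturation of $\Cohs_{X,\gamma}^{(SLF)}$.

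Next I would check saturation on geometric points. By Theorem \ref{thm:MainThm}, two points $E, E' \in \Cohs_{X,\gamma}^{(SR)}(k)$ satisfy $\pi(E) = \pi(E')$ precisely when they are $S$-equivalent, equivalently when their polystable graduations $\gr_{JH}(E)$ and $\gr_{JH}(E')$ are isomorphic. Now let $E$ be an $(SR)$ sheaf with $\pi(E) \in \pi(\Cohs_{X,\gamma}^{(SLF)})$, so that $\gr_{JH}(E) \cong \gr_{JH}(E')$ for some $(SLF)$ sheaf $E'$. Since for sheaves in our abelian category the Seshadri and Jordan--H\"older filtrations coincide (the remark following Proposition \ref{prop:abelianCategory}), property $(SLF)$ of $E'$ means exactly that its graduation $\gr_{JH}(E')$ is locally free; hence $\gr_{JH}(E)$ is locally free as well. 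But $\gr_{JH}(E)$ is a Seshadri graduation of $E$, so by Definition \ref{def:Seshadri} the $\mu$-semistable sheaf $E$ has property $(SLF)$. This gives $\pi^{-1}(\pi(\Cohs_{X,\gamma}^{(SLF)})) \subseteq \Cohs_{X,\gamma}^{(SLF)}$; the reverse inclusion is trivial, establishing saturation.

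Finally I would assemble the conclusions. Saturation yields that $M^{(SLF)}_{X,\gamma} \coloneqq \pi(\Cohs_{X,\gamma}^{(SLF)})$ is open in $M^{(SR)}_{X,\gamma}$ and that $\pi$ restricts to a separated good moduli space $\Cohs_{X,\gamma}^{(SLF)} \to M^{(SLF)}_{X,\gamma}$, separatedness and finite type being inherited by an open subspace. The statement on geometric points then follows from Theorem \ref{thm:MainThm}: the geometric points of $M^{(SLF)}_{X,\gamma}$ are exactly those $S$-equivalence classes of $(SR)$ sheaves whose polystable representative is locally free, and by the saturation argument each such class consists precisely of sheaves with property $(SLF)$. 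The only genuine point to get right---and hence the main, rather mild, obstacle---is the saturation step, namely that property $(SLF)$ is detected on the polystable Jordan--H\"older graduation, which is where Proposition \ref{prop:equivDef} and the coincidence of Seshadri and Jordan--H\"older filtrations enter; the descent of the good moduli space to the saturated open locus is then formal.
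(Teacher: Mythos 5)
Your proposal is correct and follows essentially the same route as the paper: the paper's proof likewise reduces everything to observing that $\Cohs_{X,\gamma}^{(SLF)}$ is a saturated open substack of $\Cohs_{X,\gamma}^{(SR)}$ with respect to $\Cohs_{X,\gamma}^{(SR)} \to M^{(SR)}_{X,\gamma}$ (invoking \cite[Remark 6.2]{alper2013good}), with saturation being clear from the description of the geometric points of $M^{(SR)}_{X,\gamma}$ as $S$-equivalence classes given in Theorem \ref{thm:MainThm}. You have merely spelled out the details (openness via Proposition \ref{prop:openness}, and the check that local freeness of the Jordan--H\"older graduation is detected on the closed point of each fiber) that the paper leaves implicit.
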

\begin{proof}
It suffices to remark that $\Cohs_{X,\gamma}^{(SLF)}$ is a saturated open substack of $\Cohs_{X,\gamma}^{(SR)}$ with respect to the map $\Cohs_{X,\gamma}^{(SR)} \to M^{(SR)}_{X,\gamma}$, cf. \cite[Remark 6.2]{alper2013good}. This is clear by the description of the geometric points of $M^{(SR)}_{X,\gamma}$.
\end{proof}

{
\begin{rmk}\label{rmk:UnivProp}
By \cite[Theorem 6.6]{alper2013good} the good moduli space $\Cohs_{X,\gamma}^{(SR)} \to M^{(SR)}_{X,\gamma}$ is universal for maps to algebraic spaces, i.e., every map $\Cohs_{X,\gamma}^{(SR)} \to Y$ to an algebraic space factors uniquely as
\[
\xymatrix{ \Cohs_{X,\gamma}^{(SR)} \ar[d] \ar[dr] &   \\   M^{(SR)}_{X,\gamma} \ar@{-->}[r] &  Y }
\]
The same holds true for $\Cohs_{X,\gamma}^{(SLF)} \to M^{(SLF)}_{X,\gamma}$.
\end{rmk}
}


\section{The good moduli space $M^{(SLF)}_{X,\gamma}$}\label{sect:ModuliSpaceLF}

We denote by $\cE$ the universal family on $\Cohs_{X,\gamma}^{(SLF)} \times X$ and consider the Donaldson morphism {(see \cite[Section 8.1]{HuybrechtsLehn})}
\[
    \lambda_\cE : K(X) \to \Pic(\Cohs_{X,\gamma}^{(SLF)}), \quad u \mapsto \det Rp_*(\cE \otimes q^*u),
\]
where $p$ and $q$ are the natural projections
\[
    \xymatrix{ & \Cohs_{X,\gamma}^{(SLF)} \times X \ar[dl]_p \ar[dr]^q & \\  \Cohs_{X,\gamma}^{(SLF)} &  & X }
\]
Let $\cL \coloneqq \lambda_\cE(u)$ be the line bundle in $\Pic(\Cohs_{X,\gamma}^{(SLF)})$ corresponding to  
\[
    u \coloneqq - \chi(\gamma \cdot h^n)h^{n-1} + \chi(\gamma \cdot h^{n-1})h^n,
\]
where $h = [\cO_H] \in K(X)$. Note that $\chi(\gamma \cdot u) = 0$, and as a consequence one can show that $\cL$ on $\Cohs_{X,\gamma}^{(SLF)}$ descends to $M^{(SLF)}_{X,\gamma}$, see below.

\begin{prop}\label{prop:LBdescent}
The line bundle $\cL = \lambda_\cE(u)$ on $\Cohs_{X,\gamma}^{(SLF)}$ descends to a line bundle $L$ on $M^{(SLF)}_{X,\gamma}$.
\end{prop}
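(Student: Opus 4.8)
The plan is to apply Alper's descent criterion for line bundles along a good moduli space morphism, which reduces the question to checking that $\cL$ is trivialized by the automorphism groups of the closed points of the stack. Recall from Theorem \ref{thm:MainThm} that the closed points of $\Cohs_{X,\gamma}^{(SLF)}$ correspond to polystable sheaves, and a polystable sheaf $E = \oplus_i V_i \otimes E_i$ (with the $E_i$ stable and pairwise non-isomorphic, $V_i$ multiplicity spaces) has automorphism group $\prod_i \GL(V_i)$. By Alper's theory \cite[Theorem 10.3]{alper2013good}, a line bundle on the stack descends to the good moduli space if and only if for every such closed point the induced character of the automorphism group is trivial. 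Thus the entire proof reduces to a weight computation for $\cL = \lambda_\cE(u)$ at these polystable points.

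First I would compute the character by which $\Aut(E) = \prod_i \GL(V_i)$ acts on the fiber $\cL_{[E]} = \det R\Gamma(X, E \otimes u)^{\vee}$ (identifying $u$ with a virtual sheaf of the indicated class). A central element $\lambda \cdot \id$ acting on the factor $V_i$ scales $E_i$-components by $\lambda$, and the determinant of the derived pushforward then transforms by $\lambda$ raised to the power $\chi(X, E_i \otimes u)$. Since $\chi(\gamma \cdot u) = 0$ by the stated choice of $u = -\chi(\gamma \cdot h^n)h^{n-1} + \chi(\gamma \cdot h^{n-1})h^n$, the total weight vanishes; but descent requires that each \emph{individual} factor $\GL(V_i)$ act trivially, i.e. that $\chi(X, E_i \otimes u) = 0$ for every stable summand $E_i$. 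This is where the specific form of $u$ does the work: because every stable factor $E_i$ of a polystable sheaf of class $\gamma$ has the \emph{same slope} $\mu(E_i) = \mu(\gamma)$, one shows that the Euler characteristic $\chi(X, F \otimes u)$, viewed as a function of $F$, depends only on $\rk(F)$ and $c_1(F)\cdot H^{n-1}$, and the combination defining $u$ is precisely the one making this Leray–Hirsch-type expression proportional to $\mu(F) - \mu(\gamma)$, hence zero on each factor of fixed slope $\mu(\gamma)$.

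Concretely, I would expand $\chi(X, F \otimes u)$ using Riemann–Roch and the projection formula. Writing $h = [\cO_H]$, the classes $h^{n-1}$ and $h^n$ pair with $\mathrm{ch}(F)$ so that $\chi(F \otimes h^{n-1})$ and $\chi(F \otimes h^n)$ are, to leading order, controlled by $\rk(F)$ and $c_1(F)\cdot H^{n-1}$; the coefficients $-\chi(\gamma\cdot h^n)$ and $\chi(\gamma\cdot h^{n-1})$ are chosen so that the two contributions cancel exactly when $c_1(F)\cdot H^{n-1} / \rk(F) = c_1(\gamma)\cdot H^{n-1}/\rk(\gamma)$, which is the slope-matching condition satisfied by each stable factor. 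I expect the main obstacle to be organizing this Riemann–Roch computation cleanly: one must verify that the dependence on the higher Chern characters of $F$ genuinely drops out (not merely to leading order), so that $\chi(F \otimes u)$ is an honest multiple of $\mu(F) - \mu(\gamma)$ rather than only vanishing generically. Once this numerical identity is established at the level of $K$-theory classes, the vanishing of each weight is immediate, and Alper's criterion yields the descended line bundle $L$ on $M^{(SLF)}_{X,\gamma}$.
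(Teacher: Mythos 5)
Your proposal is correct and follows essentially the same route as the paper: Alper's descent criterion \cite[Theorem 10.3]{alper2013good} applied at the closed points (polystable locally free sheaves with $\Aut(E) \cong \prod_i \GL_{r_i}$), the character of the stabilizer action on the fiber of $\lambda_\cE(u)$ given by $\prod_i \det(g_i)^{\chi(E_i\cdot u)}$, and the vanishing $\chi(E_i\cdot u)=0$ forced by the slope equality $\mu(E_i)=\mu(\gamma)$ together with the choice of $u$. The paper verifies the vanishing via the proportionality $\chi(E_i\cdot h^{n-1})\chi(E\cdot h^n) = \chi(E\cdot h^{n-1})\chi(E_i\cdot h^n)$ rather than your explicit Riemann--Roch expansion of $\chi(F\cdot u)$ as a multiple of $\mu(\gamma)-\mu(F)$, but these are the same computation in different packaging.
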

\begin{proof}
By \cite[Theorem 10.3]{alper2013good} we know that $\cL$ descends to a line bundle on the good moduli space $M^{(SLF)}_{X,\gamma}$ if and only if it has trivial stabilizer action at the closed points of $\Cohs_{X,\gamma}^{(SLF)}$. So consider a closed point $t \in \Cohs_{X,\gamma}^{(SLF)}$ corresponding to some polystable sheaf
\[
    E = E_1^{\oplus r_1} \oplus \ldots \oplus E_m^{\oplus r_m}
\]
such that $E_1,\ldots,E_m$ are pairwise non-isomorphic stable locally free sheaves of slope $\mu = \mu(E)$. Then
\[
    \Aut(E) \cong \GL_{r_1} \times \cdots \times \GL_{r_m},
\]
and as shown in \cite[Section 4.3]{Tajakka} a group element $(g_1,\ldots,g_m) \in \Aut(E)$ acts on the fiber of $\lambda_{\cE}(u)$ at $t$ by multiplication by
\[
    \det(g_1)^{\chi(E_1 \cdot u)} \cdots \det(g_m)^{\chi(E_m \cdot u)}.
\]
Note that $\mu(E_i) = \mu(E)$ for each $i$, {which gives
\begin{align*}
    \frac{\int_X \ch(E_i)H^{n-1}\Todd_X}{\int_X \ch(E_i)H^{n}\Todd_X} ={}& \frac{\mu(E_i)}{\deg(X)} + \int_X H^{n-1}\Todd_X \\
    ={}& \frac{\mu(E)}{\deg(X)} + \int_X H^{n-1}\Todd_X \\
    ={}& \frac{\int_X \ch(E)H^{n-1}\Todd_X}{\int_X \ch(E)H^{n}\Todd_X}.
\end{align*}
Since $\ch(h) = \ch(\cO_X) - \ch(\cO_X(-H))$, we also get
\[
    \ch(h^n) = H^n \quad \text{and}\quad  \ch(h^{n-1}) = H^{n-1} - \frac{n-1}{2}H^n,
\]
Therefore by using the Hirzebruch-Riemann-Roch formula we obtain
\[
    \frac{\chi(E_i \cdot h^{n-1})}{\chi(E_i \cdot h^n)} = \frac{\chi(E \cdot h^{n-1})}{\chi(E \cdot h^n)},
\]
which further gives
\begin{align*}
     \chi(E_i \cdot u) ={}&- \chi(E_i \cdot h^{n-1})\chi(\gamma \cdot h^n) + \chi(E_i \cdot h^n)\chi(\gamma \cdot h^{n-1}) \\
     ={}&- \chi(E_i \cdot h^{n-1})\chi(E \cdot h^n) + \chi(E_i \cdot h^n)\chi(E \cdot h^{n-1}) \\
     ={}& 0.
\end{align*}
}
Thus the stabilizer action on $\cL$ is trivial at the closed point $t \in \Cohs_{X,\gamma}^{(SLF)}$.
\end{proof}

\begin{rmk}
The statement and the proof of Proposition \ref{prop:LBdescent} remain valid if one replaces the condition $(SLF)$ by the condition $(SR)$.
\end{rmk}

\begin{prop}\label{prop:SemiampleLB}
There is an integer $N > 0$ such that the line bundle $L^{\otimes N}$ is globally generated and separates points on $M^{(SLF)}_{X,\gamma}$.
\end{prop}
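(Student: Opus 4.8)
The plan is to exhibit $H^0(M^{(SLF)}_{X,\gamma}, L^{\otimes N})$ as a space of theta (determinant-of-cohomology) functions and to deduce both global generation and point separation from the classical positivity of theta line bundles on the moduli of bundles on a curve. By Proposition \ref{prop:LBdescent} the bundle $L^{\otimes N}$ pulls back to $\cL^{\otimes N}$ on the stack, and since $\Cohs_{X,\gamma}^{(SLF)} \to M^{(SLF)}_{X,\gamma}$ is a good moduli space the pullback $H^0(M^{(SLF)}_{X,\gamma}, L^{\otimes N}) \to H^0(\Cohs_{X,\gamma}^{(SLF)}, \cL^{\otimes N})$ is an isomorphism (\cite{alper2013good}). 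It therefore suffices to produce enough sections of $\cL^{\otimes N}$ upstairs and to check that the induced morphism separates closed points. The sections I would use are the canonical determinant sections: for a coherent sheaf $F$ on $X$ with $[F] = N u$ in $K(X)$ the complex $Rp_*(\cE \otimes q^* F)$ is perfect of Euler characteristic $\chi(\gamma \cdot N u) = 0$, so $\det Rp_*(\cE \otimes q^* F) \cong \lambda_\cE(Nu) = \cL^{\otimes N}$ carries a canonical section $\theta_F$ whose non-vanishing at $[E]$ is equivalent to $H^\bullet(X, E \otimes F) = 0$.

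The role of the specific class $u = -\chi(\gamma \cdot h^n) h^{n-1} + \chi(\gamma \cdot h^{n-1}) h^n$ is that it is represented by sheaves supported on a curve. Concretely, let $\iota : C \hookrightarrow X$ be a smooth complete-intersection curve cut out by $n-1$ general members of $|mH|$, and let $G$ be a vector bundle on $C$; choosing the rank and degree of $G$ suitably realises $[\iota_* G] = N u$ in $K(X)$. Taking $F = \iota_* G$ and using the projection formula $E \otimes F \cong \iota_*(E|_C \otimes G)$, the section $\theta_F$ is nonzero at $[E]$ exactly when $H^0(C, E|_C \otimes G) = 0$, where $\chi(C, E|_C \otimes G) = \chi(\gamma \cdot N u) = 0$; this is precisely the theta condition on the moduli space $M_C$ of semistable bundles on $C$ of the induced numerical type. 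By the Mehta--Ramanathan restriction theorem, for $m \gg 0$ and general $C$ the restriction $E|_C$ of a sheaf with property $(SLF)$ is a semistable bundle and the restriction of its polystable representative $\gr_{JH}(E)$ remains polystable. Since the family of $\mu$-semistable sheaves of class $\gamma$ is bounded (as already used for quasi-compactness of $\Cohs_{X,\gamma}^{(SLF)}$), I can fix a single $m$ and a single $N$ that work simultaneously for all closed points.

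Global generation of $L^{\otimes N}$ then reduces to global generation of a power of the theta bundle on $M_C$: for each $[E]$ the semistable bundle $E|_C$ admits a complementary $G$ with $\chi(E|_C \otimes G) = 0$ and $H^0(C, E|_C \otimes G) = 0$, whence $\theta_{\iota_* G}([E]) \neq 0$. The main obstacle is point separation, which I would split into two inputs. First, restriction should be faithful on the bounded family: for $m \gg 0$ and general $C$ the map $\Hom_X(V, W) \to \Hom_C(V|_C, W|_C)$ is an isomorphism (by Serre vanishing applied to $\sheafHom(V, W)$ along the Koszul resolution of $\cO_C$), so non-isomorphic polystable representatives on $X$ restrict to non-isomorphic polystable bundles on $C$, and the $S$-equivalence classes of $\Cohs_{X,\gamma}^{(SLF)}$ inject into isomorphism classes on $C$. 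Second, theta functions separate non-isomorphic polystable bundles on a fixed $C$, which is classical. Combining the two, for non-$S$-equivalent $[E], [E']$ one finds $G$ with $H^0(C, E|_C \otimes G) = 0 \neq H^0(C, E'|_C \otimes G)$, i.e. $\theta_{\iota_* G}([E]) \neq 0 = \theta_{\iota_* G}([E'])$. I expect the genuinely delicate point to be uniformity, namely arranging one $N$ and a bounded supply of test data $(C, G)$ that works at once for all closed points; this is where boundedness of the moduli problem and the positivity results of Tajakka \cite{Tajakka} enter.
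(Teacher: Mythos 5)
Your strategy --- restrict to a complete intersection curve and import positivity from the moduli space of semistable bundles on that curve --- is in substance the strategy of the paper; the paper packages it as a morphism of good moduli spaces $\varphi\colon M^{(SLF)}_{X,\gamma}\to M^{Gss}_{C,\gamma_C}$ along which the ample determinant line bundle pulls back to $L^{a^{n-1}}$, whereas you unwind it into explicit theta sections. The genuine gap in your version is the uniformity issue you yourself flag, and it is not a loose end but the crux. Mehta--Ramanathan gives, for each individual sheaf $E$, a degree $a_E$ and a \emph{general} curve of that degree with semistable restriction, with no control of $a_E$ as $E$ ranges over the closed points of the moduli space; but your sections all live in one line bundle $\cL^{\otimes N}$, which pins down the degree $a$ of every curve used (via $\rk(G)\,a^{n-1}$ being determined by $N$ and $\gamma$). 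This matters because of the very Faltings criterion you invoke: if $E|_C$ is unstable for every smooth complete intersection curve $C$ of that fixed degree, then $H^0(C,E|_C\otimes G)\neq 0$ for every admissible $G$, so \emph{every} theta section vanishes at $[E]$ and global generation fails at that point. Deducing a uniform degree from boundedness plus openness is not automatic (one would need, for instance, that semistable restriction in degree $a$ persists in higher degrees, which itself requires proof), and Tajakka's results \cite{Tajakka} concern Bridgeland moduli on surfaces and play no role here. The paper closes exactly this gap by invoking Langer's effective restriction theorem \cite[Corollary 5.4]{langer2004semistable}: it provides one explicit degree $a$, depending only on the class $\gamma$, such that restriction to a \emph{fixed} smooth complete intersection curve $C$ preserves (semi)stability for all members of the family at once, and does so in flat families --- which is also what makes the restriction morphism of stacks, hence $\varphi$, well defined.

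Two smaller points. First, a sign slip: with the paper's normalization $\lambda_\cE(v)=\det Rp_*(\cE\otimes q^*v)$, the canonical theta section attached to a sheaf $F$ lives in $\lambda_\cE(-[F])=\bigl(\det Rp_*(\cE\otimes q^*F)\bigr)^{-1}$, and in fact no coherent sheaf can have class $Nu$, since the $h^{n-1}$-coefficient $-N\chi(\gamma\cdot h^n)=-N\rk(\gamma)H^n$ of $Nu$ is negative; what you need is $[\iota_*G]=-Nu$. Second, ``theta functions separate points on $M_C$'' is not classical at a fixed small level: by Raynaud's examples generalized theta linear systems can have base points, and base-point freeness and separation require the level (essentially the rank of $G$) to be large, by results of the type proved by Popa and Esteves. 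The clean route to both global generation and point separation --- and the one the paper takes --- is to use ampleness of the determinant bundle on $M^{Gss}_{C,\gamma_C}$ (classical, via GIT) together with the isomorphism $\Hom_X(E,E')\cong\Hom_C(E|_C,E'|_C)$ for $a\gg0$, which you also propose via the Koszul resolution and which is fine; note that injectivity of $\varphi$ on closed points additionally uses that stable factors restrict to stable bundles (again supplied by Langer's theorem), so that non-isomorphic polystable representatives restrict to non-$S$-equivalent bundles on $C$.
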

\begin{proof}
The proof follows the same strategy as in \cite[Section 3.3.2]{greb2017compact}. By \cite[Corollary 5.4]{langer2004semistable} we can find a smooth complete intersection $C = H_1 \cap \ldots \cap H_{n-1} \subset X$ with $H_1,\ldots,H_{n-1} \in |aH|$ of sufficiently large degree $a$, such that for any family $\cE \in \Cohs_{X,\gamma}^{(SLF)}(S)$ the restriction $\cE|_{S \times C}$ is an $S$-flat family of Gieseker-semistable sheaves of class $\gamma_C \coloneqq \gamma|_C$ on $C$. Here, the $(SLF)$ property is essential. Indeed, the fact that $\cE$ is an $S$-flat family of locally free (and not merely reflexive) sheaves implies that the restriction $\cE|_{S \times C}$ remains $S$-flat. In addition, the $(SLF)$ property ensures that the fibers of the restricted family $\cE|_{S \times C}$  are indeed semistable on $C$ (for sufficiently large $a$ and $C$ smooth), since the restriction to $C$ of the Seshadri graduation of each fiber $\cE_s$, $s\in S$, will be torsion free, a condition needed in order to apply \cite[Corollary 5.4]{langer2004semistable}. Thus we obtain a natural morphism of stacks
\[
    \iota: \Cohs_{X,\gamma}^{(SLF)} \to \Cohs_{C,\gamma_C}^{Gss},
\]
where $\Cohs_{C,\gamma_C}^{Gss}$ is the algebraic stack of Gieseker semistable sheaves of class $\gamma_C$ on $C$. Moreover, if $\cE_C$ is the universal family on $\Cohs_{C,\gamma_C}^{Gss} \times C$ and 
\[
    w \coloneqq - \chi(\gamma_C \cdot h|_C)[\cO_C] + \chi(\gamma_C)h|_C \in K(C),
\]
then there is an isomorphism
\[
    \iota^* \lambda_{\cE_C}(w) \cong \lambda_{\cE}(u)^{a^{n-1}},
\]
cf. \cite[Proposition 3.4]{greb2017compact}. 

Note that $\Cohs_{C,\gamma_C}^{Gss}$ also admits a good moduli space $M^{Gss}_{C,\gamma_C}$, which is the Gieseker moduli space of semistable sheaves of class $\gamma_C$ on $C$, and it is well known that $\lambda_{\cE_C}(w)$ descends to an ample line bundle $A$ on $M^{Gss}_{C,\gamma_C}$. Hence $\iota$ induces a natural map between the corresponding good moduli spaces
\[
    \varphi : M^{(SLF)}_{X,\gamma} \to M^{Gss}_{C,\gamma_C}
\]
such that $\varphi^* A \cong L^{a^{n-1}}$, showing that $L$ is semiample. Additionally, if $a$ is sufficiently large, then for any two closed points $E, E' \in M^{(SLF)}_{X,\gamma}$ there is an isomorphism
\[
    \Hom(E,E') \to \Hom(E|_C,E'|_C),
\]
so $\varphi$ is injective at the level of geometric points. Therefore $\varphi$ separates the points of $M^{(SLF)}_{X,\gamma}$.
\end{proof}

\begin{thm}\label{thm:SchemeStr}
The moduli space $M^{(SLF)}_{X,\gamma}$ is a quasi-projective scheme 
over $k$ and $L$ is ample on $M^{(SLF)}_{X,\gamma}$.
\end{thm}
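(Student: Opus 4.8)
The plan is to exhibit $M := M^{(SLF)}_{X,\gamma}$ as an open subscheme of a projective $k$-scheme by means of the morphism $\varphi$ produced in Proposition \ref{prop:SemiampleLB}, and then to read off both the scheme structure and the ampleness of $L$ from this embedding. Recall that $\varphi \colon M \to M^{Gss}_{C,\gamma_C}$ maps into the projective Gieseker moduli space, satisfies $\varphi^* A \cong L^{a^{n-1}}$ with $A$ ample, and is injective on geometric points.

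First I would check that $\varphi$ is separated and quasi-finite. Separatedness is automatic: $M$ is a separated good moduli space of finite type over $k$ (Corollary \ref{cor:MainThm}, Theorem \ref{thm:MainThm}), so the structure morphism $M \to \Spec k$ is separated, and since it factors through the separated scheme $M^{Gss}_{C,\gamma_C}$, the morphism $\varphi$ is separated. For quasi-finiteness, $\varphi$ is of finite type and injective on geometric points; each fibre is therefore of finite type over a field and supported at a single point, hence $0$-dimensional, so $\varphi$ is quasi-finite.

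Then I would invoke Zariski's Main Theorem for algebraic spaces (cf. \cite{stacks-project}) to factor the separated quasi-finite morphism $\varphi$ as an open immersion $j \colon M \hookrightarrow \bar M$ followed by a finite morphism $\bar\varphi \colon \bar M \to M^{Gss}_{C,\gamma_C}$. Since $\bar\varphi$ is finite and its target is a scheme, $\bar M$ is a scheme, and being finite over a projective $k$-scheme it is itself projective; hence $M$, open in $\bar M$, is a quasi-projective $k$-scheme. Finally, the line bundle $\bar\varphi^* A$ is ample on $\bar M$ (a finite pullback of an ample bundle is ample), and $L^{a^{n-1}} = \varphi^* A = j^*(\bar\varphi^* A)$ is its restriction to the open subscheme $M$, hence ample; as a positive power of $L$ is ample, $L$ is ample.

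The one genuinely non-formal point is that $M$ is \emph{a priori} only an algebraic space, so the crux is the application of Zariski's Main Theorem in the algebraic-space setting, which simultaneously promotes $M$ to a scheme and embeds it into a projective one. Once this is in place the remaining verifications — separatedness and quasi-finiteness of $\varphi$, and the transfer of ampleness along a finite morphism and an open immersion — are routine.
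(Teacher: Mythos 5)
Your proposal is correct and takes essentially the same route as the paper: both factor the quasi-finite separated morphism $\varphi$ of Proposition \ref{prop:SemiampleLB} via Zariski's Main Theorem into an open immersion $j$ followed by a finite morphism onto the projective scheme $M^{Gss}_{C,\gamma_C}$, deduce projectivity of the target of $j$, and obtain ampleness of $L$ from $L^{a^{n-1}} \cong j^*\bar\varphi^*A$. The only organizational difference is that the paper first cites \cite[Proposition 3.1]{OlssonStarrQuot} to promote $M^{(SLF)}_{X,\gamma}$ to a scheme and then applies the scheme-level ZMT, whereas you apply ZMT for algebraic spaces directly and deduce schemeness from finiteness over a scheme; both variants are valid and rest on the same facts.
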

\begin{proof}
By the previous result and its proof, there exists a quasi-finite morphism
\[
    \varphi : M^{(SLF)}_{X,\gamma} \to M^{Gss}_{C,\gamma_C}
\]
with $M^{Gss}_{C,\gamma_C}$ a projective scheme. It follows by \cite[Proposition 3.1]{OlssonStarrQuot} that $M^{(SLF)}_{X,\gamma}$ is a scheme of finite type over $k$. By Zariski's Main Theorem \cite[Tag 05K0]{stacks-project} there exists a factorization
\[
    \xymatrix{ M^{(SLF)}_{X,\gamma} \ar[rd]_ \varphi \ar[rr]_ j &  &  T \ar[ld]^\pi \\  &  M^{Gss}_{C,\gamma_C} &  }
\]
where $j$ is an open immersion and $\pi$ is finite. As $M^{Gss}_{C,\gamma_C}$ is projective, we obtain that $T$ is projective as well, hence $M^{(SLF)}_{X,\gamma}$ is quasi-projective.  Moreover $L$ is ample on $M^{(SLF)}_{X,\gamma}$, since $ L^{a^{n-1}}\cong\varphi^* A \cong j^*\pi^* A$, where $A$ is the ample line bundle on $M^{Gss}_{C,\gamma_C}$ defined in the proof of Proposition \ref{prop:SemiampleLB}; cf. \cite[Tag 01PR]{stacks-project}. 
\end{proof}

The following shows that we can also obtain $M^{(SLF)}_{X,\gamma}$ as a good GIT quotient, in the sense of Mumford \cite{mumfordGIT}. Since the family of $\mu$-semistable sheaves of class $\gamma$ is bounded, there is a sufficiently large integer $m$ such that any $E \in \Cohs_{X,\gamma}^{(SLF)}(k)$ is Castelnuovo-Mumford $m$-regular, and so we can write $E$ as a quotient
\[
    H^0(X,E(m)) \otimes \cO_X(-m) \to E \to 0.
\]
Set $V \coloneqq k^{\oplus P_H(\gamma,m)}$, and let $R_{X,\gamma}^{(SLF)}$ be the open subscheme of $\Quot(V \otimes \cO_X(-m),\gamma)$ containing all quotients $[q: V \otimes \cO_X(-m) \to E]$ such that
\begin{enumerate}
    \item $E$ satisfies property $(SLF)$,
    \item $q$ induces a linear isomorphism $V \to H^0(X,E(m))$.
\end{enumerate}
Then there is a natural $\GL(V)$ action on $R_{X,\gamma}^{(SLF)}$, and one can show that $\Cohs_{X,\gamma}^{(SLF)} \cong [R_{X,\gamma}^{(SLF)}/\GL(V)]$, cf. \cite[Section 2.5.1]{greb2020moduliKahler}. 

For any closed point $E\in\Cohs_{X,\gamma}^{(SLF)}$
there is a section $\sigma \in H^0(\Cohs_{X,\gamma}^{(SLF)}, \cL^{\otimes N})$ such that $\sigma(E) \neq 0$ and $(\Cohs_{X,\gamma}^{(SLF)})_\sigma$ is cohomologically affine, since $L$ is ample on $M_{X,\gamma}^{(SLF)}$, cf. \cite[Propositions 3.10(i), 4.7(i)]{alper2013good}. 
Thus all closed points of $\Cohs_{X,\gamma}^{(SLF)}$ are semistable with respect to $\cL$, cf. \cite[Definition 11.1]{alper2013good}.  Then by \cite[Section 13.5]{alper2013good} we get that $R_{X,\gamma}^{(SLF)} \to M_{X,\gamma}^{(SLF)}$ is the good GIT quotient. 
Thus we have 
\begin{prop}\label{prop:GITalgebraic}
The moduli space  $M_{X,\gamma}^{(SLF)}$ is the good GIT quotient of $R_{X,\gamma}^{(SLF)}$ by the $\GL(V)$ action described above.   
\end{prop}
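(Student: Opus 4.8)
The plan is to read this proposition as the formal endpoint of Alper's dictionary between good moduli spaces equipped with an ample line bundle and good GIT quotients, applied to the quotient presentation $\Cohs_{X,\gamma}^{(SLF)} \cong [R_{X,\gamma}^{(SLF)}/\GL(V)]$ recorded above.

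First I would collect the two inputs already established. On the one hand, $\cL = \lambda_\cE(u)$ descends to a line bundle $L$ on $M_{X,\gamma}^{(SLF)}$ by Proposition \ref{prop:LBdescent}, and $L$ is ample by Theorem \ref{thm:SchemeStr}. On the other hand, the stack is presented as $[R_{X,\gamma}^{(SLF)}/\GL(V)]$ with $R_{X,\gamma}^{(SLF)}$ an explicit open subscheme of a Quot scheme carrying a $\GL(V)$-action, and $\cL$ carries the induced $\GL(V)$-linearization.

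The decisive step is to promote the ampleness of $L$ on the good moduli space to a statement about the stack itself, namely that every closed point of $\Cohs_{X,\gamma}^{(SLF)}$ is semistable with respect to $\cL$ in the sense of \cite[Definition 11.1]{alper2013good}. Fix a closed point $E$, and write $\pi : \Cohs_{X,\gamma}^{(SLF)} \to M_{X,\gamma}^{(SLF)}$ for the good moduli space morphism. Ampleness of $L$ provides, for some $N > 0$, a global section $s$ of $L^{\otimes N}$ with $s(\pi(E)) \neq 0$ and with affine non-vanishing locus $(M_{X,\gamma}^{(SLF)})_s$. Pulling back along $\pi$ yields a section $\sigma \in H^0(\Cohs_{X,\gamma}^{(SLF)}, \cL^{\otimes N})$ with $\sigma(E) \neq 0$ and $(\Cohs_{X,\gamma}^{(SLF)})_\sigma = \pi^{-1}\left((M_{X,\gamma}^{(SLF)})_s\right)$. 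Since $\pi$ is a good moduli space morphism it is cohomologically affine, so the base change $\pi^{-1}\left((M_{X,\gamma}^{(SLF)})_s\right) \to (M_{X,\gamma}^{(SLF)})_s$ is cohomologically affine over an affine base; composing these via \cite[Propositions 3.10(i), 4.7(i)]{alper2013good} shows that $(\Cohs_{X,\gamma}^{(SLF)})_\sigma$ is cohomologically affine, which is precisely the $\cL$-semistability of $E$.

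With every closed point shown to be $\cL$-semistable, the remaining identification is purely formal. Through the presentation $[R_{X,\gamma}^{(SLF)}/\GL(V)]$ and the $\GL(V)$-linearization of $\cL$, Alper's comparison of good moduli spaces with GIT quotients \cite[Section 13.5]{alper2013good} identifies $R_{X,\gamma}^{(SLF)} \to M_{X,\gamma}^{(SLF)}$ with the good GIT quotient, which gives the claim. I expect the only point needing genuine care to be the cohomological affineness argument of the preceding paragraph --- checking that the loci produced by the ampleness of $L$ are honestly affine and that their preimages under $\pi$ stay cohomologically affine --- since, once semistability of all closed points is in hand, the GIT identification is immediate from Alper's framework.
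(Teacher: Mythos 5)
Your proposal is correct and takes essentially the same route as the paper's own proof: both deduce from the ampleness of $L$ (Theorem \ref{thm:SchemeStr}) that every closed point of $\Cohs_{X,\gamma}^{(SLF)}$ admits a section of some $\cL^{\otimes N}$ with cohomologically affine non-vanishing locus, hence is semistable in the sense of \cite[Definition 11.1]{alper2013good}, and then conclude via \cite[Section 13.5]{alper2013good} through the presentation $\Cohs_{X,\gamma}^{(SLF)} \cong [R_{X,\gamma}^{(SLF)}/\GL(V)]$. The only difference is that you spell out explicitly the pullback argument (that $(\Cohs_{X,\gamma}^{(SLF)})_{\pi^*s} = \pi^{-1}\bigl((M_{X,\gamma}^{(SLF)})_s\bigr)$ is cohomologically affine because $\pi$ is a good moduli space morphism over an affine base), which the paper compresses into the citation of \cite[Propositions 3.10(i), 4.7(i)]{alper2013good}.
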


\begin{prop}\label{prop:GITanalytic}
When $k=\C$, the analytification $M_{X,\gamma}^{(SLF),an}$ of $M_{X,\gamma}^{(SLF)}$ is an analytic Hilbert quotient for the analytification of $R_{X,\gamma}^{(SLF)}$ with the corresponding $\GL(V)$ action.   
\end{prop}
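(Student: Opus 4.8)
The final statement to prove is Proposition \ref{prop:GITanalytic}:

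\begin{proof}[Proof proposal]
The plan is to transport the algebraic good GIT quotient structure of Proposition \ref{prop:GITalgebraic} to the analytic category, identifying the analytification $M_{X,\gamma}^{(SLF),an}$ with the analytic Hilbert quotient of $R_{X,\gamma}^{(SLF),an}$ by the induced $\GL(V)^{an}$-action. Recall that, in the sense of Heinzner--Migliorini--Polito and Greb--Toma, an analytic Hilbert quotient of a complex space $Y$ by a reductive complex Lie group $G$ is a $G$-invariant Stein-fibered map $\pi: Y \to Y/\!\!/G$ onto a complex space that is universal among $G$-invariant holomorphic maps and identifies points whose orbit closures meet; the key local requirement is that $Y/\!\!/G$ be covered by open sets whose preimages are $G$-invariant and cohomologically affine (Stein) in an equivariant sense. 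So the heart of the matter is to produce such a cover analytically and to verify the orbit-closure separation property.

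First I would recall that by Proposition \ref{prop:GITalgebraic} the map $R_{X,\gamma}^{(SLF)} \to M_{X,\gamma}^{(SLF)}$ is a good GIT quotient, which means $M_{X,\gamma}^{(SLF)}$ is covered by affine opens $M_\sigma$ whose preimages $(R_{X,\gamma}^{(SLF)})_\sigma$ are $\GL(V)$-invariant affine schemes on which the quotient is $\Spec$ of the invariant ring; here $\sigma$ ranges over the sections of powers of the descended ample line bundle $L$ furnished by Theorem \ref{thm:SchemeStr}. Analytifying, each $(R_{X,\gamma}^{(SLF)})_\sigma^{an}$ is a $\GL(V)^{an}$-invariant Stein open subset of $R_{X,\gamma}^{(SLF),an}$, and by the GAGA-type comparison for affine GIT quotients (the analytification of an affine GIT quotient is the analytic Hilbert quotient, cf.\ Greb--Toma and the references therein) the map $(R_{X,\gamma}^{(SLF)})_\sigma^{an} \to M_\sigma^{an}$ is itself an analytic Hilbert quotient. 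These local analytic Hilbert quotients then glue over the affine cover $\{M_\sigma^{an}\}$ of $M_{X,\gamma}^{(SLF),an}$, because the analytic Hilbert quotient is characterized by a local property and local analytic Hilbert quotients are unique and compatible on overlaps.

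Second I would check the global separation and universality. Since the algebraic good GIT quotient separates closed orbits and identifies $\GL(V)$-orbits whose Zariski closures meet, the corresponding analytic statement follows because orbit closures in the analytic and Zariski topologies of a quasi-projective variety agree for actions of reductive groups on the relevant affine charts; thus the fibers of the analytified map are exactly the analytic orbit-closure equivalence classes, which is precisely what the analytic Hilbert quotient requires. The universal property for holomorphic $\GL(V)^{an}$-invariant maps to complex spaces is deduced from the local universal properties on the Stein charts together with the uniqueness of the gluing.

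The main obstacle I anticipate is the careful verification that analytification commutes with the formation of the GIT quotient in this relative, noncompact setting, and in particular that gluing the local analytic Hilbert quotients produces a global one with the correct universal property. This requires invoking the comparison theorem between affine GIT quotients and analytic Hilbert quotients in a form that is compatible with the open cover coming from the ample $L$, and confirming that the invariant-theoretic affine charts remain Stein and equivariantly cohomologically affine after analytification. Once these comparison and gluing results are in place, the remaining steps are essentially formal.
\end{proof}
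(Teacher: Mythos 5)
Your proposal is correct in substance, but it takes a genuinely different route from the paper: the paper's entire proof is a one-line appeal to a ready-made comparison theorem, \cite[Proposition 2.5]{Mayrand}, which says precisely that the analytification of a good GIT quotient is an analytic Hilbert quotient, so that the statement follows immediately from Proposition \ref{prop:GITalgebraic}. What you do instead is re-derive that comparison result in the case at hand: you use that a good GIT quotient is, locally over $M_{X,\gamma}^{(SLF)}$, an affine quotient $\Spec A \to \Spec A^{\GL(V)}$ (and indeed the preimages $(R_{X,\gamma}^{(SLF)})_\sigma$ are affine, since their quotient stacks are cohomologically affine and $\GL(V)$ is affine and linearly reductive in characteristic zero); you then invoke the affine comparison theorem (due to Heinzner, in his work on geometric invariant theory on Stein spaces) that the analytification of an affine GIT quotient by a reductive group is an analytic Hilbert quotient; and you conclude by locality of the notion of analytic Hilbert quotient on the base. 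All three ingredients are sound, though two presentational points are worth noting: no actual gluing of spaces is needed, since the candidate map $R_{X,\gamma}^{(SLF),an} \to M_{X,\gamma}^{(SLF),an}$ already exists as the analytification of the algebraic quotient map and one only has to verify a base-local condition for it on the Stein charts $M_\sigma^{an}$; and the orbit-closure separation and universality in your second step are automatic consequences of the local statement, so that step can be omitted. The trade-off is clear: the paper's proof is shorter and outsources all analytic content to a single citation, while yours exhibits the mechanism but still rests on a nontrivial cited input (the affine case), so it is one level less packaged rather than more elementary.
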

\begin{proof}
    This follows immediately by \cite[Proposition 2.5]{Mayrand}.
\end{proof}

\begin{rmk}\label{remark:BS} 
We consider the following contravariant functors on the category $(Sch/k)$ of schemes of finite type over $k$ with values in $(Sets)$:
\begin{align*}
\Sigma(S) ={}& \{ E \in \Cohs_{X,\gamma}^{(SLF)}(S)  \}/\sim, \\
    \Sigma'(S) ={}& \{ E \in \Cohs_{X,\gamma}^{(SLF)}(S) \}/\sim', \\
    \Sigma''(S) ={}& \{ E \in \Cohs_{X,\gamma}^{(SLF)}(S)\}/\sim'',
\end{align*}
where we declare $E, F\in \Cohs_{X,\gamma}^{(SLF)}(S)$ to be equivalent by $\sim$ if they are isomorphic, by $\sim'$ if there exists a line bundle $L$ on $S$ such that $E\sim F\otimes p^* L$, and respectively by $\sim''$ if there exist filtrations $E_\bullet: 0 = E_0 \subset E_1 \subset \ldots \subset E_m = E$, $F_\bullet: 0 = F_0 \subset F_1 \subset \ldots \subset F_l = F$ with $S$-flat graduations 
such that $\gr(E_\bullet),\gr(F_\bullet)$ are in $\Cohs_{X,\gamma}^{(SLF)}(S)$ and
$\gr(E_\bullet)\sim'\gr(F_\bullet)$. 
When $k=\C$ we define in a similar way the functors $\Sigma^{an}$, $\Sigma'^{an}$, $\Sigma''^{an}$ on the category $(An)$ of complex analytic spaces.

{By Proposition \ref{prop:GITalgebraic}, 
we obtain that $R_{X,\gamma}^{(SLF)} \to M_{X,\gamma}^{(SLF)}$ is in particular a categorical quotient. Hence by standard arguments, see \cite[Lemma 4.3.1]{HuybrechtsLehn}, we get that $\Sigma$ and $\Sigma'$ are corepresented by $M_{X,\gamma}^{(SLF)}$. Furthermore, following the proof of \cite[Theorem 6.10]{maruyama2016moduli}, one deduces that $\Sigma''$ is also corepresented by $M_{X,\gamma}^{(SLF)}$.} The same arguments apply in the analytic setup, showing that $M_{X,\gamma}^{(SLF),an}$ corepresents the functors $\Sigma^{an}$, $\Sigma'^{an}$, $\Sigma''^{an}$.

Moreover $M_{X,\gamma}^{(SLF)}$ is a {\em coarse moduli space} for $\Sigma''$ in the sense that $M_{X,\gamma}^{(SLF)}$ corepresents $\Sigma''$ and the natural map $\Sigma''(\Spec k)\to \Hom(\Spec k,M_{X,\gamma}^{(SLF)})$ is bijective. In the same way, when $k=\C$,  $M_{X,\gamma}^{(SLF), an}$ is a  coarse moduli space for $\Sigma''^{an}$ in the category of complex analytic spaces.
\end{rmk}

When $X$ is a surface, we can show furthemore that $M^{(SLF)}_{X,\gamma}$ embeds in a suitable moduli space $M_{X,\gamma}^\sigma$ of Bridgeland semistable objects, which was recently shown to be projective by Tajakka \cite{Tajakka}. {Note that in this case $M^{(SLF)}_{X,\gamma}$ coincides with $M^{(SR)}_{X,\gamma}$, since any reflexive sheaf is locally free on a surface \cite[Corollary 1.4]{hartshorne1980stable}.}

We recall the main facts we will need about $M_{X,\gamma}^\sigma$. For every real number $\beta$, one can define a torsion pair $(\cT_\beta,\cF_\beta)$ on $\Coh(X)$ by setting
\begin{align*}
    \cT_\beta ={}& \{ E \in \Coh(X) \mid \mu(Q) \le \beta  \text{ for any nonzero quotient $Q$ of }E \}, \\
    \cF_\beta ={}& \{ E \in \Coh(X) \mid \mu(F) > \beta \text{ for any nonzero subsheaf $F$ of }E \}.
\end{align*}
This defines a tilted heart $\Coh^\beta(X) \coloneqq \langle \cF_\beta[1],\cT_\beta \rangle$ which is the subcategory of $\Db(X)$ whose objects are given by exact triangles of the form 
\[
    F[1] \to E \to T
\]
with $F = \cH^{-1}(E) \in \cF_\beta$ and $T = \cH^0(E) \in \cT_\beta$. 

By \cite[Theorem 1.1]{Tajakka} there exists a Bridgeland stability condition $\sigma = (\cA,Z)$ on $X$, where $\cA = \Coh^{\beta_0}(X)[-1]$ for some $\beta_0 \in \R$, which satisfies the following properties, see loc. cit. for further details. 

\begin{enumerate}
    \item The objects of $\cA \subset \Db(X)$ are in fact given by exact triangles of the form
\[
    F \to E \to T[-1]
\]
with $T = \cH^1(E)$ a zero-dimensional sheaf on $X$ and $F = \cH^0(E)$ a $\mu$-semistable torsion-free sheaf of slope $\mu(F) = \beta_0$. 
    \item The $\sigma$-polystable objects of class $\gamma$ in $\cA \subset \Db(X)$ are of the form
    \[
        E = F \oplus \left( \bigoplus_i \cO_{p_i}^{n_i}[-1]\right),
    \]
    where $F$ is a $\mu$-polystable locally free sheaf and the $\cO_{p_i}$ are the structure sheaves of closed points $p_i \in X$.
    \item The moduli stack $\cM^\sigma_{X,\gamma}$ of $\sigma$-semistable objects of class $\gamma$ on $X$ is an algebraic stack of finite type over $k$ admitting a good moduli space $M^\sigma_{X,\gamma}$ which is a projective scheme. Moreover the geometric points of $M^\sigma_{X,\gamma}$ correspond to $\sigma$-polystable objects of class $\gamma$ in $\cA$ as above.
\end{enumerate}

\begin{thm}\label{thm:BridgelandCompact}
If $X$ is a surface, there exists an open embedding $M^{(SLF)}_{X,\gamma} \to M^\sigma_{X,\gamma}$. 
\end{thm}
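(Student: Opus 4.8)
The plan is to construct the open embedding by comparing the two good moduli spaces via their moduli-theoretic descriptions of geometric points, and then upgrading a bijection-onto-an-open-subset to a scheme-theoretic open immersion. First I would produce a morphism of stacks $\Cohs_{X,\gamma}^{(SLF)} \to \cM^\sigma_{X,\gamma}$. The natural candidate sends a family $E$ satisfying $(SLF)$ to itself, viewed as an object of the tilted heart $\cA$: since a locally free $\mu$-semistable sheaf $F$ of slope $\mu(F) = \beta_0$ sits in $\cA$ as the degenerate triangle $F \to F \to 0$ (with $\cH^1 = 0$), the $(SLF)$-sheaves are exactly the torsion-free objects of $\cA$, i.e. those with vanishing zero-dimensional part $T$. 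One must check that $(SLF)$-sheaves are $\sigma$-semistable of class $\gamma$ in $\cA$, which should follow from $\mu$-semistability together with the defining properties of $\sigma$ recalled in item (1); this needs to be verified in families so as to land in the stack $\cM^\sigma_{X,\gamma}$.

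Next, by the universal property of the good moduli space (Remark \ref{rmk:UnivProp}), this stack morphism descends to a morphism of good moduli spaces
\[
    \Phi : M^{(SLF)}_{X,\gamma} \to M^\sigma_{X,\gamma}.
\]
I would then identify the image on geometric points. By Theorem \ref{thm:MainThm} and Corollary \ref{cor:MainThm}, the points of $M^{(SLF)}_{X,\gamma}$ are the $\mu$-polystable locally free sheaves $F$ (the closed points of the $S$-equivalence classes), while by item (2) the points of $M^\sigma_{X,\gamma}$ are objects $F \oplus (\bigoplus_i \cO_{p_i}^{n_i}[-1])$ with $F$ $\mu$-polystable locally free. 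Thus $\Phi$ is injective on geometric points, with image exactly the locus where the zero-dimensional summand is absent, i.e. where $\cH^1(E) = 0$. This locus is visibly open in $M^\sigma_{X,\gamma}$: vanishing of the zero-dimensional part is an open condition, being the complement of the closed locus where $\cH^1 \ne 0$, which can be detected by upper-semicontinuity of fiber dimensions or of the relevant cohomology sheaf in the $\sigma$-family. So $\Phi$ is a bijection onto an open subspace.

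To conclude that $\Phi$ is an open immersion and not merely a bijection onto an open set, I would argue that $\Phi$ is an isomorphism onto this open locus. One clean route is to restrict the open substack $\cM^{\sigma,\circ}_{X,\gamma} \subset \cM^\sigma_{X,\gamma}$ of objects with $\cH^1 = 0$ and observe that it coincides with $\Cohs_{X,\gamma}^{(SLF)}$ as stacks: both parametrize the same torsion-free, locally free, $\mu$-semistable families of class $\gamma$ of slope $\beta_0$, and the heart inclusion identifies them. Since a good moduli space is unique up to isomorphism and its formation commutes with the open immersion of substacks (good moduli spaces localize over open saturated substacks, cf. \cite[Remark 6.2]{alper2013good}), the induced map $M^{(SLF)}_{X,\gamma} \to \Phi(M^{(SLF)}_{X,\gamma})$ is an isomorphism onto the open subspace of $M^\sigma_{X,\gamma}$ that is the good moduli space of $\cM^{\sigma,\circ}_{X,\gamma}$. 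This gives the desired open embedding.

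The main obstacle I anticipate is the stack-level identification $\Cohs_{X,\gamma}^{(SLF)} \cong \cM^{\sigma,\circ}_{X,\gamma}$ in families, as opposed to on geometric points. The subtle points are, first, verifying that every $(SLF)$-family is genuinely $\sigma$-semistable as a family of objects in the tilted heart (the abstract $\sigma$-semistability in $\cA$ must be reconciled with $\mu$-semistability of the underlying sheaf, using the structure of $\sigma$-polystable objects in item (2) together with Proposition \ref{prop:abelianCategory}); and second, checking that the openness and saturatedness hypotheses of \cite[Remark 6.2]{alper2013good} hold, so that the good moduli space of the open substack $\cM^{\sigma,\circ}_{X,\gamma}$ is the corresponding open subspace of $M^\sigma_{X,\gamma}$ rather than merely receiving a map from it. Once the stacks are identified and the substack is seen to be open and saturated, uniqueness of good moduli spaces delivers the open embedding formally.
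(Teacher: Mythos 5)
Your overall skeleton (realize $\Cohs^{(SLF)}_{X,\gamma}$ as an open substack of $\cM^\sigma_{X,\gamma}$, check it is saturated, and invoke \cite[Remark 6.2]{alper2013good}) is the same as the paper's, but the execution contains a genuine error. You claim that the $(SLF)$-sheaves are exactly the objects of $\cA$ with vanishing zero-dimensional part, i.e.\ that the locus $\cM^{\sigma,\circ}_{X,\gamma} \coloneqq \{\cH^1 = 0\}$ coincides with $\Cohs^{(SLF)}_{X,\gamma}$ as stacks. This is false: by item (1), an object of $\cA$ with $\cH^1 = 0$ is just a $\mu$-semistable torsion-free sheaf of slope $\beta_0$, so $\cM^{\sigma,\circ}_{X,\gamma} \cong \Cohs^{\mu ss}_{X,\gamma}$, which strictly contains $\Cohs^{(SLF)}_{X,\gamma}$ --- it also contains every non-locally-free $\mu$-semistable sheaf of class $\gamma$, as well as locally free ones without locally free Seshadri graduation (the paper's abelian-surface example). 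This is not a repairable imprecision, because $\cM^{\sigma,\circ}_{X,\gamma}$ is \emph{not} saturated: for a torsion-free, non-locally-free $E$ in it, rotating the triangle $E \to E^{\vee\vee} \to Q$ (with $Q = E^{\vee\vee}/E \neq 0$ zero-dimensional) gives a short exact sequence $0 \to Q[-1] \to E \to E^{\vee\vee} \to 0$ in $\cA$, so the $\sigma$-polystable degeneration of $E$ acquires summands $\cO_{p_i}[-1]$ and leaves the locus $\cH^1 = 0$. For the same reason your openness claim for $\im \Phi$ does not hold up: upper semicontinuity gives openness of $\{\cH^1 = 0\}$ in the \emph{stack} $\cM^\sigma_{X,\gamma}$ (this is \cite[Lemma 2.1.4]{Lieblich}, which the paper uses), but the good moduli space map $\pi : \cM^\sigma_{X,\gamma} \to M^\sigma_{X,\gamma}$ is a closed, surjective, hence topological quotient map, so a subset of $M^\sigma_{X,\gamma}$ is open iff its full $\pi$-preimage is open upstairs. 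The preimage of $\im \Phi$ is not $\{\cH^1 = 0\}$ (which maps onto points with nonzero point summands, namely $\pi(E)$ for $E$ non-locally-free as above), but rather the set of objects $S$-equivalent to an $(SLF)$-sheaf; identifying \emph{that} set with an open substack is precisely the saturation statement you were hoping to get for free.

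The repair is essentially forced to follow the paper's route. Embed $\Cohs^{\mu ss}_{X,\gamma}$ as the open locus $\{\cH^1 = 0\}$ of $\cM^\sigma_{X,\gamma}$, then cut down to $\Cohs^{(SLF)}_{X,\gamma}$ using the openness of property $(SLF)$ in flat families of sheaves (Proposition \ref{prop:openness}) --- an ingredient your proposal never invokes but which is indispensable, since $(SLF)$ is strictly stronger than $\mu$-semistability. Then prove saturation of this \emph{smaller} open substack: if a $\sigma$-semistable object $E'$ has the same polystable degeneration as an $(SLF)$-sheaf, its $\sigma$-Jordan--H\"older factors are $\mu$-stable locally free sheaves of slope $\beta_0$; an iterated extension in $\cA$ of such objects has $\cH^1 = 0$ by the long exact sequence of cohomology sheaves, hence is a sheaf, and it satisfies $(SLF)$ because that category is closed under extensions (Proposition \ref{prop:abelianCategory}). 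Only then does \cite[Remark 6.2]{alper2013good} give the open embedding of good moduli spaces. Your first paragraph (construction of the stack map, with $\sigma$-semistability of $\mu$-semistable sheaves of class $\gamma$ supplied by \cite{Tajakka}) and the descent via Remark \ref{rmk:UnivProp} are fine, but they are not where the content of the theorem lies.
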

\begin{proof}
There is a natural open embedding of algebraic stacks $$\Cohs^{\mu ss}_{X,\gamma} \to \cM^\sigma_{X,\gamma},$$
{where $\Cohs^{\mu ss}_{X,\gamma} \subset \Cohs_X$ is the open substack of $\mu$-semistable sheaves of class $\gamma$ on $X$.} Indeed, an $S$-family in $\cM^\sigma_{X,\gamma}(S)$ is by definition an $S$-perfect complex $\cE \in \Db(S \times X)$ such that for any closed point $s \in S$ the derived fiber $\cE|^{\textbf{L}}_{\{s\} \times X}$ is an $\sigma$-semistable object of class $\gamma$ in $\cA$, and by \cite[Lemma 2.1.4]{Lieblich} the locus $S^\circ$ of points $s \in S$ such that $\cE|^{\textbf{L}}_{\{s\} \times X}$ is concentrated only in degree $0$ is open. Therefore the derived restriction of $\cE$ to $S^\circ$ yields an $S^\circ$-flat family of coherent sheaves in $\Cohs^{\mu ss}_{X,\gamma}$.

Combining this with the open embedding $\Cohs^{(SLF)}_{X,\gamma} \subset \Cohs^{\mu ss}_{X,\gamma}$, we may view $\Cohs^{(SLF)}_{X,\gamma}$ as an open substack of $\cM^\sigma_{X,\gamma}$. By the above description of the geometric points of $M^\sigma_{X,\gamma}$, this open substack is furthermore saturated with respect to the good moduli space $\cM^\sigma_{X,\gamma} \to M^\sigma_{X,\gamma}$. Hence we obtain that the good moduli space $M^{(SLF)}_{X,\gamma}$ is an open subscheme of the projective scheme $M^\sigma_{X,\gamma}$, cf. \cite[Remark 6.2]{alper2013good}, thus proving our claim.
\end{proof}

{
\begin{rmk}\label{rmk:WNcompact}
In higher dimensions there is a natural open embedding 
\[
    M^{(SLF),wn}_{X,\gamma} \to M^{\mu ss}_{X,\gamma},
\]
where $M^{(SLF),wn}_{X,\gamma}$ is the weak normalization of $M^{(SLF)}_{X,\gamma}$, and $M^{\mu ss}_{X,\gamma}$ is the (weakly normal) moduli space of slope-semistable sheaves constructed in \cite{greb2017compact}. The proof, which we omit, follows by the methods in \cite[Theorem 5.10 and Proposition 5.12]{greb2017compact}.
\end{rmk}
}


\bibliography{mainbib.bib}

\newcommand{\etalchar}[1]{$^{#1}$}
\providecommand{\bysame}{\leavevmode\hbox to3em{\hrulefill}\thinspace}
\providecommand{\MR}{\relax\ifhmode\unskip\space\fi MR }
\providecommand{\MRhref}[2]{%
  \href{http://www.ams.org/mathscinet-getitem?mr=#1}{#2}
}
\providecommand{\href}[2]{#2}
\begin{thebibliography}{{Mum}63}

\bibitem[ABB{\etalchar{+}}22]{AlperBBLT}
Jarod Alper, Pieter Belmans, Daniel Bragg, Jason Liang, and Tuomas Tajakka, \emph{Projectivity of the moduli space of vector bundles on a curve}, Stacks project expository collection (SPEC), Cambridge: Cambridge University Press, 2022, pp.~90--125.

\bibitem[AHLH23]{AlperHLH}
Jarod Alper, Daniel Halpern-Leistner, and Jochen Heinloth, \emph{Existence of moduli spaces for algebraic stacks}, Invent. Math. \textbf{234} (2023), no.~3, 949--1038.

\bibitem[Alp13]{alper2013good}
Jarod Alper, \emph{Good moduli spaces for {Artin} stacks}, Ann. Inst. Fourier \textbf{63} (2013), no.~6, 2349--2402.

\bibitem[BS22]{BuchdahlSchumacher3}
Nicholas Buchdahl and Georg Schumacher, \emph{An analytic application of {Geometric} {Invariant} {Theory}. {II}: {Coarse} moduli spaces}, J. Geom. Phys. \textbf{175} (2022), 13.

\bibitem[BTT17]{BTT2017}
Nicholas Buchdahl, Andrei Teleman, and Matei Toma, \emph{A continuity theorem for families of sheaves on complex surfaces}, J. Topol. \textbf{10} (2017), no.~4, 995--1028.

\bibitem[CP19]{CampanaPaun}
Fr{\'e}d{\'e}ric Campana and Mihai P{\u{a}}un, \emph{Foliations with positive slopes and birational stability of orbifold cotangent bundles}, Publ. Math., Inst. Hautes {\'E}tud. Sci. \textbf{129} (2019), 1--49.

\bibitem[{Gie}77]{gieseker77}
D.~{Gieseker}, \emph{{On the moduli of vector bundles on an algebraic surface}}, {Ann. Math. (2)} \textbf{106} (1977), 45--60.

\bibitem[GRT16]{greb16moduli}
Daniel Greb, Julius Ross, and Matei Toma, \emph{Moduli of vector bundles on higher-dimensional base manifolds - construction and variation}, Int. J. Math. \textbf{27} (2016), no.~7, 27.

\bibitem[GSTW21]{greb2021HYM}
Daniel {Greb}, Benjamin {Sibley}, Matei {Toma}, and Richard {Wentworth}, \emph{{Complex algebraic compactifications of the moduli space of Hermitian Yang-Mills connections on a projective manifold}}, {Geom. Topol.} \textbf{25} (2021), no.~4, 1719--1818.

\bibitem[GT17]{greb2017compact}
Daniel {Greb} and Matei {Toma}, \emph{{Compact moduli spaces for slope-semistable sheaves}}, {Algebr. Geom.} \textbf{4} (2017), no.~1, 40--78.

\bibitem[GT20]{greb2020moduliKahler}
Daniel Greb and Matei Toma, \emph{Moduli spaces of sheaves that are semistable with respect to a {K{\"a}hler} polarisation}, J. {\'E}c. Polytech., Math. \textbf{7} (2020), 233--261.

\bibitem[{Har}80]{hartshorne1980stable}
Robin {Hartshorne}, \emph{{Stable reflexive sheaves}}, {Math. Ann.} \textbf{254} (1980), 121--176.

\bibitem[HL10]{HuybrechtsLehn}
Daniel {Huybrechts} and Manfred {Lehn}, \emph{{The geometry of moduli spaces of sheaves. 2nd ed}}, Cambridge University Press, 2010.

\bibitem[Huy16]{huybrechtsK3}
Daniel Huybrechts, \emph{Lectures on {{\(K\)}}3 surfaces}, Camb. Stud. Adv. Math., vol. 158, Cambridge: Cambridge University Press, 2016.

\bibitem[{Lan}75]{langton1975valuative}
Stacy~G. {Langton}, \emph{{Valuative criteria for families of vector bundles on algebraic varieties}}, {Ann. Math. (2)} \textbf{101} (1975), 88--110.

\bibitem[{Lan}04]{langer2004semistable}
Adrian {Langer}, \emph{{Semistable sheaves in positive characteristic}}, {Ann. Math. (2)} \textbf{159} (2004), no.~1, 251--276.

\bibitem[Lie06]{Lieblich}
Max Lieblich, \emph{Moduli of complexes on a proper morphism}, J. Algebr. Geom. \textbf{15} (2006), no.~1, 175--206.

\bibitem[Mar78]{maruyama78moduli}
Masaki Maruyama, \emph{Moduli of stable sheaves. {II}}, J. Math. Kyoto Univ. \textbf{18} (1978), 557--614.

\bibitem[Mar81]{maruyama81on}
\bysame, \emph{On boundedness of families of torsion free sheaves}, J. Math. Kyoto Univ. \textbf{21} (1981), 673--701.

\bibitem[Mar16]{maruyama2016moduli}
\bysame, \emph{Moduli spaces of stable sheaves on schemes: restriction theorems, boundedness and the {GIT} construction. {With} collaboration of {T}. {Abe} and {M}. {Inaba}}, MSJ Mem., vol.~33, Tokyo: Mathematical Society of Japan, 2016.

\bibitem[May19]{Mayrand}
Maxence Mayrand, \emph{Kempf-{Ness} type theorems and {Nahm} equations}, J. Geom. Phys. \textbf{136} (2019), 138--155.

\bibitem[MFK93]{mumfordGIT}
D.~{Mumford}, J.~{Fogarty}, and F.~{Kirwan}, \emph{{Geometric invariant theory. 3rd enl. ed}}, 3rd enl. ed. ed., vol.~34, Berlin: Springer-Verlag, 1993.

\bibitem[Miy87]{Miyaoka}
Yoichi Miyaoka, \emph{The {Chern} classes and {Kodaira} dimension of a minimal variety}, Algebraic geometry, {Proc}. {Symp}., {Sendai}/{Jap}. 1985, {Adv}. {Stud}. {Pure} {Math}. 10, 449-476 (1987)., 1987.

\bibitem[{Mum}63]{mumford63}
D.~{Mumford}, \emph{{Projective invariants of projective structures and applications}}, {Proc. Int. Congr. Math. 1962, 526-530}, 1963.

\bibitem[NS65]{narasimhan65stable}
M.~S. Narasimhan and C.~S. Seshadri, \emph{Stable and unitary vector bundles on a compact {Riemann} surface}, Ann. Math. (2) \textbf{82} (1965), 540--567.

\bibitem[OS03]{OlssonStarrQuot}
Martin Olsson and Jason Starr, \emph{Quot functors for {Deligne}-{Mumford} stacks}, Commun. Algebra \textbf{31} (2003), no.~8, 4069--4096.

\bibitem[OSS11]{OSS}
Christian Okonek, Michael Schneider, and Heinz Spindler, \emph{Vector bundles on complex projective spaces}, Modern Birkh\"auser Classics, Birkh\"auser/Springer Basel AG, Basel, 2011, corrected reprint of the 1988 edition, with an appendix by S. I. Gelfand.

\bibitem[{Ses}67]{seshadri67}
C.~S. {Seshadri}, \emph{{Space of unitary vector bundles on a compact Riemann surface}}, {Ann. Math. (2)} \textbf{85} (1967), 303--336.

\bibitem[{Sim}94]{simpson1994moduli}
Carlos~T. {Simpson}, \emph{{Moduli of representations of the fundamental group of a smooth projective variety. I}}, {Publ. Math., Inst. Hautes \'Etud. Sci.} \textbf{79} (1994), 47--129.

\bibitem[{Sta}20]{stacks-project}
The {Stacks project authors}, \emph{The stacks project}, \url{https://stacks.math.columbia.edu}, 2020.

\bibitem[Taj23]{Tajakka}
Tuomas Tajakka, \emph{Uhlenbeck compactification as a {Bridgeland} moduli space}, Int. Math. Res. Not. \textbf{2023} (2023), no.~6, 4952--4997.

\bibitem[{Tak}72]{takemoto1972stable}
Fumio {Takemoto}, \emph{{Stable vector bundles on algebraic surfaces}}, {Nagoya Math. J.} \textbf{47} (1972), 29--48.

\bibitem[Tom92]{Toma_dissertation}
Matei Toma, \emph{Holomorphe {Vektorb{\"u}ndel} auf nichtalgebraischen {Fl{\"a}chen}}, Bayreuth: Univ. Bayreuth, 1992, \url{https://dev-iecl.univ-lorraine.fr/wp-content/uploads/2020/12/these.pdf}.

\bibitem[Tom20]{toma2020criteria}
\bysame, \emph{Properness criteria for families of coherent analytic sheaves}, Algebr. Geom. \textbf{7} (2020), no.~4, 486--502.

\bibitem[Tom21]{toma2021boundedness}
\bysame, \emph{Bounded sets of sheaves on relative analytic spaces}, Ann. Henri Lebesgue \textbf{4} (2021), 1531--1563.

\bibitem[WZ25]{weissmann2023stacky}
Dario Wei{\ss}mann and Xucheng Zhang, \emph{A stacky approach to identifying the semistable locus of bundles}, Algebr. Geom. \textbf{12} (2025), no.~2, 262--298.

\end{thebibliography}
\bibliographystyle{amsalpha}

\medskip
\medskip
\begin{center}
\rule{0.4\textwidth}{0.4pt}
\end{center}
\medskip
\medskip

\end{document}